\newtheorem{thm}{Theorem}[section]
\newtheorem{cor}[thm]{Corollary}
\newtheorem{prop}[thm]{Proposition}
\newtheorem{define}[thm]{Definition}
\newtheorem{rem}[thm]{Remark}
\newtheorem{lemma}[thm]{Lemma}
\newcommand{\R}{\mathbb{R}}
\newcommand{\Z}{\mathbb{Z}}
\numberwithin{equation}{section}
\subjclass[2010]{35Q35, 76W05, 76N10}
\keywords{MHD equations,  global well-posedness, large initial data}
\begin{document}
\title[Global well-posedness to MHD equations]{Global well-posedness to the  3D incompressible MHD equations with a new class of large initial data}

\author[Renhui Wan ]{ Renhui Wan$^{1}$}

\address{$^1$ School of Mathematics Sciences,
Zhejiang University,
Hanzhou 310027, China}

\email{rhwanmath@zju.edu.cn, rhwanmath@163.com}

\vskip .2in
\begin{abstract}
We obtain the global well-posedness to the 3D  incompressible  magnetohydrodynamics (MHD) equations  in
Besov space with negative index of regularity. Particularly, we can get the global solutions for a new class of large initial data. As a byproduct, this  result improves the corresponding  result in  \cite{HHW}. In addition, we also get the global result for this system in  $\mathcal{\chi}^{-1}(\R^3)$ originally developed in \cite{LL}. More precisely, we  only assume that the norm  of  initial data is exactly smaller than the sum of viscosity and diffusivity parameters.

\end{abstract}

\maketitle

\vskip .2in
\section{Introduction}
\label{s1}
We are concerned with the  3D incompressible MHD equations:
\begin{equation} \label{1.1}
\left\{
\begin{aligned}
& \partial_t u + u\cdot\nabla u -\mu_1 \Delta u +\nabla p = B\cdot\nabla B,  \\
& \partial_t B + u\cdot\nabla B-B\cdot\nabla u -\mu_2 \Delta B=0,  \\
& {\rm div} u={\rm div}B=0,\\
& u(0,x) =u_0(x),\ B(0,x)=B_0(x),
\end{aligned}
\right.
\end{equation}
here $(t,x)\in\mathbb{R}^{+}\times\mathbb{R}^3$, $u,p,B$ stand for velocity vector, scalar pressure and magnetic vector, respectively,  $\mu_1$ and $\mu_2$ are  nonnegative viscosity and diffusivity  parameters, respectively.

\vskip .1in
For $\mu_1>0$ and $\mu_2>0$, the local well-posedness and global existence with small data for (\ref{1.1}) were obtained by Duvaut and Lions  \cite{DL} in
$d$  dimensional Sobolev space $H^s(\R^d)$, $s\ge d$. Then Sermange and Termam \cite{ST} studied the regularity of weak solutions $(u,B)\in L^\infty(0,T;H^1(\R^3))$. And some regularity criteria were established in \cite{Wu1,Wu2,Wu3}. For $\mu_1>0$ and $\mu_2=0$ (so-called non-resistive MHD equations), by the new Kato-Ponce commutator estimate,
$$\|\Lambda^s(u\cdot\nabla B)-u\cdot\nabla \Lambda^s B\|_{L^2(\R^d)}\le C\|\nabla u\|_{H^s(\R^d)}\|B\|_{H^s(\R^d)},\ s>\frac{d}{2},\ d=2,3,$$
Fefferman et al. \cite{Fefferman} proved the low regularity  local well-posedness of strong solutions, which was extended to general inhomogeneous Besov space with initial data $(u_0,B_0)\in B_{2,1}^{\frac{d}{2}-1}(\R^d)\times B_{2,1}^\frac{d}{2}(\R^d)$
 in the recent works \cite{Chemin 1} and \cite{Wan}. Furthermore, for the non-resistive version with smooth initial data near some nontrivial steady state, we refer  \cite{PZhnag1,LT,ZZhang,PZhang2} for the related works.

\vskip .1in
Due to a new observation that the velocity field plays a more important role than magnetic field. The new regularity criteria only involving the velocity were proved, see  \cite{CMZ,HX,YZ1,YZ2} and references therein.

\vskip .1in
One can easily get a new formation of (\ref{1.1}) by the following:
$$W^+:=u+B,\ W^-:=u-B,\ \nu_{+}=\frac{\mu_1+\mu_2}{2},\ \nu_{-}=\frac{\mu_1-\mu_2}{2}$$
with initial data $W^{\pm}_0(x):=u_0(x)\pm B_0(x),$ that is,
\begin{equation} \label{1.12}
\left\{
\begin{aligned}
& \partial_t W^+ + W^-\cdot\nabla W^+ -\nu_{+}\Delta W^+ +\nabla p = \nu_{-}\Delta W^-,  \\
& \partial_t W^- + W^+\cdot\nabla W^- -\nu_{+}\Delta W^- +\nabla p = \nu_{-}\Delta W^+,  \\
& {\rm div} W^+={\rm div}W^-=0,\\
& W^+(0,x) =W^+_0(x),\ W^-(0,x)=W^-_0(x).
\end{aligned}
\right.
\end{equation}

Very recently, He et al. \cite{HHW} obtained the global well-posedness for (\ref{1.12}) with initial  data $(u_0,B_0)$ satisfying:\\
($i$) $\nu_-=0$ and
\begin{equation*}
\nu_+^{-3}\|W_0^-\|_{L^3}^3\exp\{C\nu_+^{-3}\|W_0^+\|_{L^3}^3\}<\epsilon_0
\end{equation*}
or
\begin{equation*}
\nu_+^{-3}\|W_0^+\|_{L^3}^3\exp\{C\nu_+^{-3}\|W_0^-\|_{L^3}^3\}<\epsilon_0;
\end{equation*}
($ii$) $\nu_-\neq0$ and
\begin{equation}\label{1.121}
\left(\nu_+^{-2}\|W^-_0\|_{\dot{H}^\frac{1}{2}}^2+\frac{\nu_-^2}{\nu_+^2}(\nu_+^{-2}\|W_0^+\|_{\dot{H}^\frac{1}{2}}^2+\frac{\nu_-^2}{\nu_+^2})\right)
\exp\left\{C\nu_+^{-4}(\|W_0^+\|_{\dot{H}^\frac{1}{2}}^4+\nu_-^4)\right\}<\epsilon_0
\end{equation}
or
\begin{equation*}
\left(\nu_+^{-2}\|W^+_0\|_{\dot{H}^\frac{1}{2}}^2+\frac{\nu_-^2}{\nu_+^2}(\nu_+^{-2}\|W_0^-\|_{\dot{H}^\frac{1}{2}}^2+\frac{\nu_-^2}{\nu_+^2})\right)
\exp\left\{C\nu_+^{-4}(\|W_0^-\|_{\dot{H}^\frac{1}{2}}^4+\nu_-^4)\right\}<\epsilon_0.
\end{equation*}
Here $\epsilon_0$ ia a sufficiently small positive constant.
\vskip .1in

In this paper,  we will prove the global well-posedness of (\ref{1.1}) $(\mu_1>0,\mu_2>0)$ in  generalized space, $\dot{B}_{p,r}^{\frac{3}{p}-1}(\R^3)$,
by make full use of the harmonic analysis tools. The details can be given as follows:

\begin{thm}\label{t1}
Consider (\ref{1.1}) with initial data $(u_0,B_0)\in \dot{B}_{p,r}^{\frac{3}{p}-1}(\R^3),$ $(p,r)\in (1,\infty)\times [1,\infty),$ satisfying ${\rm div}u_0={\rm div}B_0=0$. There exists a constant $C$ and a small constant $\eta>0$ such that if
\begin{equation}\label{1.14}
\left(\|W^-_0\|_{\dot{B}_{p,r}^{\frac{3}{p}-1}}+\frac{\nu_-}{\nu_+}(\|W_0^+\|_{\dot{B}_{p,r}^{\frac{3}{p}-1}}+\nu_-)\right)
\exp\left\{C\nu_+^{-\frac{2}{1-\epsilon}}(\nu_-+\|W_0^+\|_{\dot{B}_{p,r}^{\frac{3}{p}-1}})^\frac{2} {1-\epsilon}          \right\}<\eta \nu_+
\end{equation}
or
\begin{equation}\label{1.15}
\left(\|W^+_0\|_{\dot{B}_{p,r}^{\frac{3}{p}-1}}+\frac{\nu_-}{\nu_+}(\|W_0^-\|_{\dot{B}_{p,r}^{\frac{3}{p}-1}}+\nu_-)\right)
\exp\left\{C\nu_+^{-\frac{2}{1-\epsilon}}(\nu_-+\|W_0^-\|_{\dot{B}_{p,r}^{\frac{3}{p}-1}})^\frac{2}{1-\epsilon}           \right\}<\eta\nu_+,
\end{equation}
where $(\epsilon,r)$ satisfies

\begin{equation} \label{1.16}
\left\{
\begin{aligned}
 0\le \epsilon<1&, \ {\rm if }\  r=1;  \\
 0< \epsilon<1&,  \ {\rm if }\  1<r\le 2;  \\
1-\frac{2}{r}\le \epsilon<1&,\  {\rm if }\  2<r<\infty.
\end{aligned}
\right.
\end{equation}

Then (\ref{1.1}) admits a unique global solution $(u,B)$ satisfying
$$(u,B)\in \tilde{C}([0,\infty); \dot{B}_{p,r}^{\frac{3}{p}-1}(\R^3))\cap \tilde{L}^1([0,\infty); \dot{B}_{p,r}^{\frac{3}{p}+1}(\R^3)).$$
\end{thm}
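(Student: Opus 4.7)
My plan is to work with the Elsässer system~(\ref{1.12}) and to assume, without loss of generality, that (\ref{1.14}) holds, the symmetric case (\ref{1.15}) being handled by the swap $W^+\leftrightarrow W^-$. The hypothesis marks $W_0^-$ as the small datum, so the target is to prove that $W^-$ stays small on all of $[0,\infty)$ while $W^+$ admits a finite (but potentially large) global bound in the Chemin-Lerner functional
\begin{equation*}
X^\pm(T):=\|W^\pm\|_{\tilde L^\infty_T(\dot B^{\frac{3}{p}-1}_{p,r})} + \nu_+\|W^\pm\|_{\tilde L^1_T(\dot B^{\frac{3}{p}+1}_{p,r})}.
\end{equation*}
Existence then follows by Friedrichs regularization together with standard compactness using these uniform bounds, and uniqueness at the critical regularity is obtained by a subtraction argument in $\tilde L^\infty_T \dot B^{\frac{3}{p}-1}_{p,r}$; the core of the argument is the closed a~priori bound described below.

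\textbf{Linear and nonlinear bounds.} Apply $\dot\Delta_q$ to each equation of (\ref{1.12}), absorb the pressure via the Leray projection, and test against $|\dot\Delta_q W^\pm|^{p-2}\dot\Delta_q W^\pm$. The pointwise lower bound for $\int |\dot\Delta_q f|^{p-2}\dot\Delta_q f(-\Dd)\dot\Delta_q f\,dx$ at frequency $2^q$, combined with the $L^p$-commutator lemma for divergence-free transport, produces
\begin{equation*}
\|\dot\Delta_q W^\pm\|_{L^\infty_T L^p} + \nu_+ 2^{2q}\|\dot\Delta_q W^\pm\|_{L^1_T L^p} \lesssim \|\dot\Delta_q W_0^\pm\|_{L^p} + \nu_- 2^{2q}\|\dot\Delta_q W^\mp\|_{L^1_T L^p} + \int_0^T\|\dot\Delta_q(W^\mp\cdot\na W^\pm)\|_{L^p}\,dt,
\end{equation*}
the second term on the right encoding the cross-coupling $\nu_-\Dd W^\mp$ of (\ref{1.12}). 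Summing in $q$ in $\ell^r$ with weight $2^{q(\frac{3}{p}-1)}$ and treating the convective term via Bony's paraproduct decomposition, I aim to establish the asymmetric product bound
\begin{equation*}
\|W^\mp\cdot\na W^\pm\|_{\tilde L^1_T\dot B^{\frac{3}{p}-1}_{p,r}} \lesssim \|W^\mp\|_{\tilde L^{\frac{2}{1-\epsilon}}_T\dot B^{\frac{3}{p}-\epsilon}_{p,r}}\cdot \|W^\pm\|_{\tilde L^{\frac{2}{1+\epsilon}}_T\dot B^{\frac{3}{p}+\epsilon}_{p,r}},
\end{equation*}
with (\ref{1.16}) being exactly the $(\epsilon,r)$ range in which the Minkowski exchange $\tilde L^{2/(1-\epsilon)}_T \hookrightarrow L^{2/(1-\epsilon)}_T$ at the Chemin-Lerner level is available (hence the threshold $\epsilon\ge 1-2/r$ when $r>2$).

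\textbf{Coupled bootstrap.} Interpolating each factor above between $\tilde L^\infty_T\dot B^{\frac{3}{p}-1}_{p,r}$ and $\tilde L^1_T\dot B^{\frac{3}{p}+1}_{p,r}$ and combining with the linear step yields the coupled a~priori system
\begin{align*}
X^+(T) &\le C\|W_0^+\|_{\dot B^{\frac{3}{p}-1}_{p,r}} + C\tfrac{\nu_-}{\nu_+}X^-(T) + C\nu_+^{-1}X^-(T)^{1-\epsilon}X^+(T)^{1+\epsilon},\\
X^-(T) &\le C\|W_0^-\|_{\dot B^{\frac{3}{p}-1}_{p,r}} + C\tfrac{\nu_-}{\nu_+}X^+(T) + C\nu_+^{-1}X^+(T)^{1-\epsilon}X^-(T)^{1+\epsilon}.
\end{align*}
Under the a~priori bootstrap $X^-(T)\le\eta\nu_+$ with $\eta$ small, the first inequality is a Bernoulli-type nonlinear Gronwall for $X^+$; solving it produces
\begin{equation*}
X^+(T) \le \bigl(\|W_0^+\|_{\dot B^{\frac{3}{p}-1}_{p,r}}+\nu_-\bigr)\exp\Bigl\{C\nu_+^{-\frac{2}{1-\epsilon}}\bigl(\nu_-+\|W_0^+\|_{\dot B^{\frac{3}{p}-1}_{p,r}}\bigr)^{\frac{2}{1-\epsilon}}\Bigr\},
\end{equation*}
the exponent $2/(1-\epsilon)$ being exactly the Bernoulli exponent attached to a nonlinearity of power $1+\epsilon$. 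Plugging this into the inequality for $X^-(T)$ makes the multiplier on the linear data coincide with the left-hand side of (\ref{1.14}), so (\ref{1.14}) closes the bootstrap $X^-(T)\le\eta\nu_+$ by continuity in $T$.

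\textbf{Main obstacle.} The central difficulty is the cross-diffusion $\nu_-\Dd W^\mp$ on the right-hand side of~(\ref{1.12}): unlike the transport-type decoupling enjoyed by the Elsässer variables when $\nu_-=0$, this term prevents independent estimates for $W^+$ and $W^-$ and is precisely what introduces the $\nu_-/\nu_+$ factor on the left of (\ref{1.14}). A secondary technical subtlety is enforcing the $(\epsilon,r)$ range (\ref{1.16}), which is dictated by the compatibility of Bony's paraproduct with the temporal Minkowski exchange at the Chemin-Lerner level and is what forces the $r$-dependent trichotomy in the statement.
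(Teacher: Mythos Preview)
Your overall architecture (Elsässer variables, Chemin--Lerner functional $X^\pm$, bootstrap on the small component $W^-$, paraproduct estimate for the convection) is the same as the paper's, but the heart of the argument --- how the exponential in (\ref{1.14}) actually arises --- is missing, and the step you label ``Bernoulli-type nonlinear Gronwall'' does not go through as written.

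First, your coupled system has the wrong powers. Your asymmetric product bound
\[
\|W^\mp\!\cdot\!\nabla W^\pm\|_{\tilde L^1_T\dot B^{\frac3p-1}_{p,r}}\lesssim
\|W^\mp\|_{\tilde L^{2/(1-\epsilon)}_T\dot B^{\frac3p-\epsilon}_{p,r}}
\|W^\pm\|_{\tilde L^{2/(1+\epsilon)}_T\dot B^{\frac3p+\epsilon}_{p,r}}
\]
interpolates, via $\theta=(1\pm\epsilon)/2$, to $\nu_+^{-1}X^\mp X^\pm$, \emph{linear} in each factor; there is no way to obtain $(X^-)^{1-\epsilon}(X^+)^{1+\epsilon}$ from it. Second, and more seriously, with the correct bilinear term the $W^-$ inequality reads
\[
X^-(T)\le C\|W_0^-\|_{\dot B^{\frac3p-1}_{p,r}}+C\tfrac{\nu_-}{\nu_+}X^+(T)+C\,\tfrac{X^+(T)}{\nu_+}\,X^-(T),
\]
and since $X^+/\nu_+$ can be arbitrarily large under (\ref{1.14}), the last term cannot be absorbed into the left. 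This is a static algebraic inequality in $T$; there is no Gronwall mechanism available, Bernoulli or otherwise, and no exponential can emerge from it.

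What the paper does to generate the exponential is genuinely time-differential: it introduces the weight $f(t)=\|W^+(t)\|_{\dot B^{\frac3p-\epsilon}_{p,\infty}}^{2/(1-\epsilon)}$, sets $W^-_\lambda:=W^-\exp\{-\lambda\int_0^t f\}$, and proves a \emph{weighted} product estimate (Lemma~\ref{l3}, (\ref{SKP2}))
\[
\|W^+\!\cdot\!\nabla W^-_\lambda\|_{\tilde L^1_t(\dot B^{\frac3p-1}_{p,r})}
\le C\|W^-_\lambda\|_{\tilde L^1_t(\dot B^{\frac3p+1}_{p,r})}^{\frac{1+\epsilon}{2}}
\|W^-_\lambda\|_{\tilde L^1_{t,f}(\dot B^{\frac3p-1}_{p,r})}^{\frac{1-\epsilon}{2}}.
\]
The equation for $W^-_\lambda$ carries an extra $+\lambda f(t)W^-_\lambda$ on the left, which after the localized $L^p$ estimate produces exactly a $\lambda\|W^-_\lambda\|_{\tilde L^1_{t,f}}$ term; choosing $\lambda\sim\nu_+^{-(1+\epsilon)/(1-\epsilon)}$ absorbs the weighted factor coming from (\ref{SKP2}) via Young. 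Undoing the weight then yields the factor $\exp\{C\nu_+^{-\frac{2}{1-\epsilon}}(\nu_-+\|W_0^+\|)^{\frac{2}{1-\epsilon}}\}$. The constraint $r\le 2/(1-\epsilon)$ (your threshold $\epsilon\ge 1-2/r$) enters when one passes from $\int_0^t f$ to $\|W^+\|_{\tilde L^{2/(1-\epsilon)}_t}$; the separate requirement $\epsilon>0$ for $r>1$ comes from the paraproduct bound $\|S_{j-1}W^+\|_{L^\infty}\lesssim 2^{j\epsilon}\|W^+\|_{\dot B^{-\epsilon}_{\infty,\infty}}$ inside the proof of (\ref{SKP2}), not from a Minkowski exchange. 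Without this weighted-norm device (or an equivalent differential Gronwall at the Chemin--Lerner level), your bootstrap on $X^-$ cannot close when $\|W_0^+\|/\nu_+$ is large.
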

\vskip .1in
If $\nu_-=0$, i.e., $\mu_1=\mu_2=\nu_+$, we have a corollary immediately.
\begin{cor}\label{c1}
Consider (\ref{1.1}) with initial data $(u_0,B_0)\in \dot{B}_{p,r}^{\frac{3}{p}-1}(\R^3),$ $(p,r)\in (1,\infty)\times [1,\infty),$ satisfying ${\rm div}u_0={\rm div}B_0=0$. There exists a constant $C$ and a small constant $\eta>0$ such that if
\begin{equation}\label{1.141}
\|W^-_0\|_{\dot{B}_{p,r}^{\frac{3}{p}-1}}
\exp\left\{C\nu_+^{-\frac{2}{1-\epsilon}}\|W_0^+\|_{\dot{B}_{p,r}^{\frac{3}{p}-1}}^\frac{2}{1-\epsilon} \right\}<\eta \nu_+
\end{equation}
or
\begin{equation}\label{1.151}
\|W^-_0\|_{\dot{B}_{p,r}^{\frac{3}{p}-1}}
\exp\left\{C\nu_+^{-\frac{2}{1-\epsilon}}\|W_0^+\|_{\dot{B}_{p,r}^{\frac{3}{p}-1}}^\frac{2}{1-\epsilon} \right\}<\eta \nu_+,
\end{equation}
where $(\epsilon,r)$ satisfies   (\ref{1.16}). Then (\ref{1.1}) admits a unique global solution $(u,B)$ satisfying
$$(u,B)\in \tilde{C}([0,\infty); \dot{B}_{p,r}^{\frac{3}{p}-1}(\R^3))\cap \tilde{L}^1([0,\infty); \dot{B}_{p,r}^{\frac{3}{p}+1}(\R^3)).$$
\end{cor}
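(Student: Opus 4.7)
The plan is to deduce Corollary~\ref{c1} directly from Theorem~\ref{t1} by specializing to the case $\mu_1 = \mu_2$, i.e.\ $\nu_- = 0$. Under this hypothesis, $\nu_+ = \mu_1 = \mu_2$, and the coupling between the two Elsässer-type equations in (\ref{1.12}) becomes purely transport-convective, since the $\nu_-\Delta W^\mp$ source terms vanish identically. Thus no new functional-analytic work is required: the corollary is simply the image of the theorem under the substitution $\nu_- = 0$.

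First, I would verify that condition (\ref{1.141}) is precisely what (\ref{1.14}) collapses to when $\nu_- = 0$. The cross term $\frac{\nu_-}{\nu_+}(\|W_0^+\|_{\dot{B}_{p,r}^{3/p-1}} + \nu_-)$ appearing in the prefactor vanishes identically, so that the left-hand side reduces to $\|W_0^-\|_{\dot{B}_{p,r}^{3/p-1}}$. Likewise, the expression $(\nu_- + \|W_0^+\|_{\dot{B}_{p,r}^{3/p-1}})^{2/(1-\epsilon)}$ inside the exponential reduces to $\|W_0^+\|_{\dot{B}_{p,r}^{3/p-1}}^{2/(1-\epsilon)}$. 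Matching (\ref{1.141}) term by term then yields exactly (\ref{1.14}) with $\nu_- = 0$. The alternative smallness assumption (\ref{1.151}) is handled identically starting from (\ref{1.15}), after interchanging the roles of $W_0^+$ and $W_0^-$. With either hypothesis in force, Theorem~\ref{t1} applies and produces the unique global solution $(u,B) \in \tilde{C}([0,\infty);\dot{B}_{p,r}^{3/p-1}(\R^3)) \cap \tilde{L}^1([0,\infty);\dot{B}_{p,r}^{3/p+1}(\R^3))$, which is the claim.

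The main obstacle is not in the proof of the corollary itself, which is purely an algebraic simplification, but rather encoded in Theorem~\ref{t1}, where one must close a coupled paradifferential estimate for $(W^+, W^-)$ in $\tilde{L}^\infty_t\dot{B}_{p,r}^{3/p-1} \cap \tilde{L}^1_t\dot{B}_{p,r}^{3/p+1}$, handle the nonlinear transport terms $W^\mp \cdot \nabla W^\pm$ via Bony decomposition, and exploit the restriction (\ref{1.16}) on $(\epsilon,r)$ to absorb the time-integrability losses that arise when converting Chemin--Lerner norms back to classical ones. Once Theorem~\ref{t1} is granted, Corollary~\ref{c1} follows in a single line, as the author indicates.
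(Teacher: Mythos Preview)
Your proposal is correct and matches the paper's approach exactly: the paper states that Corollary~\ref{c1} follows immediately from Theorem~\ref{t1} upon setting $\nu_-=0$, and you have verified term by term that conditions (\ref{1.141}) and (\ref{1.151}) are precisely what (\ref{1.14}) and (\ref{1.15}) reduce to in that case.
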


\begin{rem}\label{r1}
($i$) We will  construct the global  solution with  a new class of large initial data. More precisely,
assume that  $\phi$ satisfies the condition in Proposition \ref{l2}, let
 $$u_0=(\partial_2\phi,-\partial_1\phi,0),\ \ B_0=2\sin^2\frac{x_3}{2\epsilon}(\partial_2\phi,-\partial_1\phi,0),$$
then ${\rm div}u_0={\rm div}B_0=0$ and $\|(u_0,B_0)\|_{\dot{B}_{p,r}^{\frac{3}{p}-1}}\le \mathfrak{M}$ $(p>3)$,  which is independent of $\epsilon$. Moreover,
thanks to  Proposition  \ref{l2},
there exists a positive constant $C_1$ and $C_2$,
$$\|u_0\|_{\dot{B}_{p,r}^{\frac{3}{p}-1}}\ge C_1,\ \ \|B_0\|_{\dot{B}_{p,r}^{\frac{3}{p}-1}}\ge \frac{C_1}{2},$$
$$\|u_0-B_0\|_{\dot{B_{p,r}^{\frac{3}{p}-1}}}\le C_2\epsilon^{1-\frac{3}{p}},$$
which ensures the conditions (\ref{1.14})($\nu_+\gg \nu_-$) and (\ref{1.141}) hold. Additionally,
 the assumption $\nu_+\gg\nu_-$ is reasonable in astrophysical magnetic phenomena, see Remark 2.3 in \cite{HHW}.
Combining with the above explanations, this class of large
data can lead the global well-posedness to (\ref{1.1}). \\
($ii$) One can easily check that condition (\ref{1.14}) is equal to (\ref{1.121}) when $p=r=2$ and choosing $\epsilon=\frac{1}{2}$. By Bernstein's inequality, we have the following embedding relationship:
$$\dot{H}^\frac{1}{2}\hookrightarrow \dot{B}_{p,r}^{\frac{3}{p}-1},\ \ p> 2,  r\ge2.$$
So our result improves the corresponding work under (\ref{1.121}) in \cite{HHW}. By the same way, similar improvements can also  be obtained  under (\ref{1.15}) , (\ref{1.141}) and (\ref{1.151}).
\end{rem}
We shall point out that the above result can not be extended to $p=\infty.$ As a matter of fact, by these works \cite{ill2} and \cite{ill1} concerning
the well-known Navier-Stokes equations, (\ref{1.1}) may ill-posedness in this endpoint Besov space.
\vskip .1in
Next, we  consider the space $\chi^{-1}(\R^3)$, which is smaller than $\dot{B}_{\infty,r}^{-1}$ due to Proposition \ref{l33}.  It was originally  developed in \cite{LL} and applied to get  the global well-posedness for the Navier-Stokes equations under
$$\|u_0\|_{\chi^{-1}}<\mu.$$
For MHD equations (\ref{1.1}), similar result holds under
\begin{equation}\label{1.2}
\|u_0\|_{\chi^{-1}}+\|B_0\|_{\chi^{-1}}<\min\{\mu,\eta\},
\end{equation}
see \cite{KWang} for details.
\vskip .1in
We have some new result  in $\chi^{-1}(\R^3).$

\begin{thm}\label{t2}
Consider (\ref{1.1}) with initial data $(u_0,B_0)\in \chi^{-1}(\R^3)$ satisfying ${\rm div}u_0={\rm div}B_0=0$. There exists a constant $C$ such that
if
\begin{equation}\label{1.21}
\left(\|W_0^-\|_{\chi^{-1}}+\frac{C\nu_-}{\nu_+}(\nu_-+\|W_0^+\|_{\chi^{-1}})\right)\exp\left\{\frac{C}{\nu_+^2}(\nu_-+\|W_0^+\|_{\chi^{-1}})^2\right\}<2\nu_+
\end{equation}
or
\begin{equation}\label{1.22}
\left(\|W_0^+\|_{\chi^{-1}}+\frac{C\nu_-}{\nu_+}(\nu_-+\|W_0^-\|_{\chi^{-1}})\right)\exp\left\{\frac{C}{\nu_+^2}(\nu_-+\|W_0^-\|_{\chi^{-1}})^2\right\} <2\nu_+.
\end{equation}
Then (\ref{1.1}) admits a unique global solution $(u,B)$ satisfying
$$(u,B)\in C([0,\infty); \chi^{-1}(\R^3))\cap L^1([0,\infty);\chi^1(\R^3)).$$
\end{thm}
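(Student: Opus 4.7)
The plan is to adapt the $W^\pm$-formulation approach of Theorem~\ref{t1} to the Lei-Lin space $\chi^{-1}(\R^3)$, working on the Fourier side where the $\chi^{s}$-norms $\||\xi|^s\hat u\|_{L^1}$ are most transparent. Local well-posedness in $\chi^{-1}$ with maximal time $T^*>0$ follows from the standard contraction-mapping argument of \cite{LL,KWang}; the goal is to show $T^*=\infty$ under (\ref{1.21}) (the case (\ref{1.22}) is symmetric).

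For a priori bounds, I would take the Fourier transform of each equation in (\ref{1.12}), take moduli, divide by $|\xi|$, and integrate in $\xi$ to obtain
\begin{equation*}
\frac{d}{dt}\|W^\pm\|_{\chi^{-1}} + \nu_+\|W^\pm\|_{\chi^1} \le \|W^\mp\cdot\nabla W^\pm\|_{\chi^{-1}} + \nu_-\|W^\mp\|_{\chi^1}.
\end{equation*}
The convection term is handled by rewriting $W^\mp\cdot\nabla W^\pm=\nabla\cdot(W^\mp\otimes W^\pm)$ using $\mathrm{div}\,W^\mp=0$ and applying Young's convolution inequality on the Fourier side, giving $\|W^\mp\cdot\nabla W^\pm\|_{\chi^{-1}}\le \|\hat{W^\mp}\|_{L^1}\|\hat{W^\pm}\|_{L^1}$. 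The Cauchy-Schwarz interpolation $\|\hat W\|_{L^1}\le \|W\|_{\chi^{-1}}^{1/2}\|W\|_{\chi^1}^{1/2}$ then allows one factor of $\|\cdot\|_{\chi^1}$ to be absorbed into the dissipation via Young's inequality, producing the working inequality
\begin{equation*}
\frac{d}{dt}\|W^\mp\|_{\chi^{-1}} + \tfrac{\nu_+}{2}\|W^\mp\|_{\chi^1} \le \frac{C}{\nu_+}\|\hat{W^\pm}\|_{L^1}^{\,2}\,\|W^\mp\|_{\chi^{-1}} + \nu_-\|W^\pm\|_{\chi^1}.
\end{equation*}

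Next I would run a bootstrap. Under the ansatz $\sup_{s\le t}\|W^-(s)\|_{\chi^{-1}}\ll \nu_+$ on a continuation interval $[0,T]$, integrating the $W^+$-inequality and absorbing the Gronwall factor produces the linear-type bound $\|W^+\|_{L^\infty_t\chi^{-1}} + \tfrac{\nu_+}{2}\|W^+\|_{L^1_t\chi^1}\le 2(\|W^+_0\|_{\chi^{-1}}+\nu_-\|W^-\|_{L^1_t\chi^1})$, from which
$\int_0^t\|\hat{W^+}\|_{L^1}^{\,2}\,ds\le \|W^+\|_{L^\infty_t\chi^{-1}}\|W^+\|_{L^1_t\chi^1}$ is controlled by $\nu_+^{-1}(\nu_-+\|W^+_0\|_{\chi^{-1}})^{2}$ plus lower-order terms in $\nu_-\|W^-\|_{L^1_t\chi^1}$. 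Gronwall on the $W^-$-inequality then yields
\begin{equation*}
\|W^-\|_{L^\infty_t\chi^{-1}}+\tfrac{\nu_+}{2}\|W^-\|_{L^1_t\chi^1} \le \bigl(\|W^-_0\|_{\chi^{-1}}+\tfrac{C\nu_-}{\nu_+}(\nu_-+\|W^+_0\|_{\chi^{-1}})\bigr)\exp\Bigl\{\tfrac{C}{\nu_+^2}(\nu_-+\|W^+_0\|_{\chi^{-1}})^{2}\Bigr\},
\end{equation*}
which, under (\ref{1.21}), is strictly below the bootstrap threshold. A standard continuity argument upgrades this to a global bound, whence $T^*=\infty$.

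The main obstacle is closing the bootstrap loop in the presence of the cross dissipation $\nu_-\Delta W^\mp$, which couples $W^+$ and $W^-$ outside the convection term and forces $W^-$ to inherit a contribution from $\|W^+\|_{L^1_t\chi^1}$ and vice versa. It is precisely the prefactor $\nu_-/\nu_+$ in (\ref{1.21}) that renders this feedback contractive; one must track the iteration $\nu_-Y^-\leadsto \nu_-^2Y^+/\nu_+\leadsto\cdots$ carefully to see that it geometrically terminates at the displayed expression. A secondary delicacy is attributing the Gronwall exponent to $\|\hat{W^+}\|_{L^1}^{\,2}$ (large) rather than $\|\hat{W^-}\|_{L^1}^{\,2}$ (small), which is what makes the exponential factor in (\ref{1.21}) depend on the $W^+$-datum in the correct way.
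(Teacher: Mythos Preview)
Your proposal is correct and follows essentially the same route as the paper's proof: the bilinear estimate $\|u\cdot\nabla v\|_{\chi^{-1}}\le\|u\|_{\chi^0}\|v\|_{\chi^0}$ (which the paper singles out in Remark~\ref{r2} as the key new ingredient), the interpolation $\|W\|_{\chi^0}\le\|W\|_{\chi^{-1}}^{1/2}\|W\|_{\chi^1}^{1/2}$, Young's inequality to absorb one $\chi^1$-factor into the dissipation, a bootstrap on the smallness of $W^-$, a linear bound on $W^+$, and Gronwall on the $W^-$-inequality to close.

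Two small refinements relative to the paper. First, your bootstrap hypothesis should read $\|W^-\|_{L^\infty_t\chi^{-1}}+\nu_+\|W^-\|_{L^1_t\chi^1}\le b$ rather than just $\|W^-\|_{L^\infty_t\chi^{-1}}\ll\nu_+$, since the control of $\int_0^t\|W^-\|_{\chi^0}^2\,d\tau\le\|W^-\|_{L^\infty_t\chi^{-1}}\|W^-\|_{L^1_t\chi^1}$ in the $W^+$-step requires both pieces. Second, your fixed absorption constant $\nu_+/2$ would only deliver a threshold of the form $\eta\nu_+$ with some $\eta<2$, not the stated $2\nu_+$; the paper recovers the sharp $2\nu_+$ by introducing free Young's-inequality parameters $a=C_2\nu_+$ and $a_1=C_1\nu_+$ with $C_1,C_2\in(0,2)$ satisfying $2(2-\epsilon_0)^2<C_2(2-C_1)^2$, and choosing the bootstrap level $b$ in the interval $\bigl(\tfrac{2(2-\epsilon_0)}{2-C_1}\nu_+,\sqrt{2C_2}\,\nu_+\bigr)$, which lets one push the threshold arbitrarily close to $2\nu_+$.
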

Similarly, we also have a corollary immediately when $\nu_-=0$.
\begin{cor}\label{c2}
Consider (\ref{1.1}) with initial data $(u_0,B_0)\in \chi^{-1}(\R^3)$ satisfying ${\rm div}u_0={\rm div}B_0=0$. There exists a constant $C$ such that
if
\begin{equation*}
\|W_0^-\|_{\chi^{-1}}\exp\left\{\frac{C}{\nu_+^2}\|W_0^+\|_{\chi^{-1}}^2\right\} <2\nu_+
\end{equation*}
or
\begin{equation*}
\|W_0^+\|_{\chi^{-1}}\exp\left\{\frac{C}{\nu_+^2}\|W_0^-\|_{\chi^{-1}}^2\right\}<2\nu_+.
\end{equation*}
Then (\ref{1.1}) admits a unique global solution $(u,B)$ satisfying
$$(u,B)\in C([0,\infty); \chi^{-1}(\R^3))\cap L^1([0,\infty);\chi^1(\R^3)).$$
\end{cor}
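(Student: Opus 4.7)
The statement is a direct specialization of Theorem \ref{t2} to the equal-diffusivity regime $\mu_1=\mu_2$, equivalently $\nu_-=0$. My plan is therefore to verify that the two hypotheses of Corollary \ref{c2} are exactly what \eqref{1.21} and \eqref{1.22} reduce to upon setting $\nu_-=0$, and then to quote Theorem \ref{t2} for the existence, uniqueness, and regularity conclusions.

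For the verification, observe that when $\nu_-=0$ the additive prefactor $\frac{C\nu_-}{\nu_+}(\nu_-+\|W_0^\pm\|_{\chi^{-1}})$ in \eqref{1.21}--\eqref{1.22} vanishes identically, while the quantity $\nu_-+\|W_0^\pm\|_{\chi^{-1}}$ appearing inside the exponential collapses to $\|W_0^\pm\|_{\chi^{-1}}$. Hypothesis \eqref{1.21} therefore reduces to
\[
\|W_0^-\|_{\chi^{-1}}\exp\left\{\frac{C}{\nu_+^2}\|W_0^+\|_{\chi^{-1}}^2\right\}<2\nu_+,
\]
which is precisely the first hypothesis of Corollary \ref{c2}; the second alternative is handled symmetrically by reducing \eqref{1.22}. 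Theorem \ref{t2} then supplies a unique global solution in the class $C([0,\infty);\chi^{-1}(\R^3))\cap L^1([0,\infty);\chi^1(\R^3))$, which is exactly the conclusion to be proved.

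Since no substantive work remains beyond this reduction, there is no real obstacle; the only thing to be slightly careful about is that the "exponent collapse" $\nu_-+\|W_0^\pm\|_{\chi^{-1}}\mapsto\|W_0^\pm\|_{\chi^{-1}}$ inside the exponential is consistent with the constant $C$ chosen in Theorem \ref{t2} (the same absolute constant works, because the estimate degenerates continuously as $\nu_-\to 0$). Conceptually, it is worth flagging why this clean corollary is natural: when $\nu_-=0$ the coupled system \eqref{1.12} loses its cross-diffusion terms $\nu_-\Delta W^\mp$ and reduces to a pair of transport--diffusion equations in which $W^+$ is convected by $W^-$ and vice versa, with no nonlinear self-interaction. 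This "advect-the-other" structure is precisely what underlies the asymmetric smallness hypothesis: only one of $\|W_0^-\|_{\chi^{-1}}$ or $\|W_0^+\|_{\chi^{-1}}$ need be small relative to $\nu_+$, because the large component's $\chi^{-1}$ norm can be controlled a priori by a Gronwall argument driven by the small one, and this mechanism is exactly what Theorem \ref{t2} already implements in the general case.
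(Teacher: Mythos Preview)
Your proposal is correct and matches the paper's approach: the paper introduces this corollary with the phrase ``Similarly, we also have a corollary immediately when $\nu_-=0$'' and offers no further argument, so the intended proof is exactly the specialization of Theorem~\ref{t2} that you carry out. Your verification that \eqref{1.21}--\eqref{1.22} collapse to the stated hypotheses when $\nu_-=0$ is precisely the content of that one-line reduction.
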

\begin{rem}\label{r2}
The authors in \cite{LL} proved the global well-posedness for Navier-Stokes equations by using
$$\|u\cdot\nabla u\|_{\chi^{-1}}\le \|u\|_{\chi^{-1}}\|u\|_{\chi^1},$$
while we shall use the new estimate below in our  proof, i.e.,
$$\|u\cdot\nabla v\|_{\chi^{-1}}\le \|u\|_{\chi^0}\|v\|_{\chi^0}.$$
\end{rem}
\begin{rem}\label{r3}
Due to the symmetric structure of (\ref{1.12}), we only give the proof of Theorem \ref{t1} and Theorem \ref{t2} under
(\ref{1.14}) and (\ref{1.21}), respectively.
\end{rem}

The present paper is structured as follows:\\
 In section \ref{s2},  we provide some definitions of  spaces, establish several lemmas. The third section proves Theorem \ref{t1}, while the last section  gives the proof of Theorem \ref{t2}.

\vskip .1in
Let us complete this section by describing the notations we shall use in this paper.\\
{\bf Notations}  The uniform constant  $C$ is  different on different lines.  We also use $L^p$,  $\dot{B}_{p,r}^s$ and $\chi^s$  to stand for  $L^p(\mathbb{R}^d)$, $\dot{B}_{p,r}^s(\mathbb{R}^d)$ and $\chi^s(\R^d)$ in somewhere, respectively. We use $A:=B$ to stands for $A$ is defined by $B$,  and ${\bf 1}$ is  the characteristic function.

\vskip .4in
\section{ Preliminaries}
\label{s2}
In this section, we give some necessary definitions, propositions and lemmas.
\vskip .1in
 The Fourier transform is given by
$$\widehat{f}(\xi)=\int_{\R^d}e^{-ix\cdot\xi}f(x)dx.$$

Let $\mathfrak{B}=\{\xi\in\mathbb{R}^d,\ |\xi|\le\frac{4}{3}\}$ and $\mathfrak{C}=\{\xi\in\mathbb{R}^d,\ \frac{3}{4}\le|\xi|\le\frac{8}{3}\}$. Choose two nonnegative smooth radial function $\chi,\ \varphi$ supported, respectively, in $\mathfrak{B}$ and $\mathfrak{C}$ such that
$$\sum_{j\in\mathbb{Z}}\varphi(2^{-j}\xi)=1,\ \ \xi\in\mathbb{R}^d\setminus\{0\}.$$
We denote $\varphi_{j}=\varphi(2^{-j}\xi),$ $h=\mathfrak{F}^{-1}\varphi$ and $\tilde{h}=\mathfrak{F}^{-1}\chi,$ where $\mathfrak{F}^{-1}$ stands for the inverse Fourier transform. Then the dyadic blocks
$\Delta_{j}$ and $S_{j}$ can be defined as follows
$$\Delta_{j}f=\varphi(2^{-j}D)f=2^{jd}\int_{\mathbb{R}^d}h(2^jy)f(x-y)dy,\ \ S_{j}f=\sum_{k\le j-1}\Delta_{k}f$$

Formally, $\Delta_{j}$ is a frequency projection to annulus $\{\xi:\ C_{1}2^j\le|\xi|\le C_{2}2^j\}$, and $S_{j}$ is a frequency projection to the ball $\{\xi:\ |\xi|\le C2^j\}$. One  easily verifies that with our choice of $\varphi$
$$\Delta_{j}\Delta_{k}f=0\ {\rm if} \ |j-k|\ge2\ \ {\rm and}\ \  \Delta_{j}(S_{k-1}f\Delta_{k}f)=0\  {\rm if}\  |j-k|\ge5.$$
Let us recall the definition of the  Besov space.

\begin{define}\label{HB}
 Let $s\in \mathbb{R}$, $(p,q)\in[1,\infty]^2,$ the homogeneous Besov space $\dot{B}_{p,q}^s(\R^d)$ is defined by
$$\dot{B}_{p,q}^{s}(\R^d)=\{f\in \mathfrak{S}'(\R^d);\ \|f\|_{\dot{B}_{p,q}^{s}(\R^d)}<\infty\},$$
where
\begin{equation*}
\|f\|_{\dot{B}_{p,q}^s(\R^d)}=\left\{\begin{aligned}
&\displaystyle (\sum_{j\in \mathbb{Z}}2^{sqj}\|\Delta_{j}f\|_{L^p(\R^d)}^{q})^\frac{1}{q},\ \ \ \ {\rm for} \ \ 1\le q<\infty,\\
&\displaystyle \sup_{j\in\mathbb{Z}}2^{sj}\|\Delta_{j}f\|_{L^p(\R^d)},\ \ \ \ \ \ \ \ {\rm for}\ \ q=\infty,\\
\end{aligned}
\right.
\end{equation*}
and $\mathfrak{S}'(\R^d)$ denotes the dual space of $\mathfrak{S}(\R^d)=\{f\in\mathcal{S}(\mathbb{R}^d);\ \partial^{\alpha}\hat{f}(0)=0;\ \forall\ \alpha\in \ \mathbb{N}^d $\ {\rm multi-index}\} and can be identified by the quotient space of $\mathcal{S'}/\mathcal{P}$ with the polynomials space $\mathcal{P}$.
\end{define}

The norm of the space $\tilde{L}^{r_1}_t(\dot{B}_{p,r}^s)$ and $\tilde{L}^{r_1}_{t,\omega}(\dot{B}_{p,r}^s)$ is defined by
$$\|f\|_{\tilde{L}^{r_1}_t(\dot{B}_{p,r}^s)}:=\|2^{js}\|\Delta_j f\|_{L^{r_1}_tL^p}\|_{l^r(\Z)}$$
and
$$\|f\|_{\tilde{L}^{r_1}_{t,\omega}(\dot{B}_{p,r}^s)}:=\|2^{js}\left(\int_0^t \omega(\tau)^{r_1}\|\Delta_j f(\tau)\|_{L^p}^{r_1} d\tau\right)^\frac{1}{r_1}\|_{l^r(\Z)}.$$
 $f\in \tilde{C}(0,t;\dot{B}_{p,r}^s)$ means $f\in \tilde{L}^{\infty}_t(\dot{B}_{p,r}^s)$ and $\|f(t)\|_{\dot{B}_{p,r}^s}$ is continuous in time.

\vskip .1in
 The following proposition
provide Bernstein type inequalities.
\begin{prop}
Let $1\le p\le q\le \infty$. Then for any $\beta,\gamma\in (\mathbb{N}\cup \{0\})^3$, there exists a constant $C$ independent of $f,j$ such that
\begin{enumerate}
\item[1)] If $f$ satisfies
$$
\mbox{supp}\, \widehat{f} \subset \{\xi\in \mathbb{R}^d: \,\, |\xi|
\le \mathcal{K} 2^j \},
$$
 then
$$
\|\partial^\gamma f\|_{L^q(\mathbb{R}^d)} \le C 2^{j|\gamma|  +
j d(\frac{1}{p}-\frac{1}{q})} \|f\|_{L^p(\mathbb{R}^d)}.
$$
\item[2)] If $f$ satisfies
\begin{equation*}\label{spp}
\mbox{supp}\, \widehat{f} \subset \{\xi\in \mathbb{R}^d: \,\, \mathcal{K}_12^j
\le |\xi| \le \mathcal{K}_2 2^j \}
\end{equation*}
 then
$$
 \|f\|_{L^p(\mathbb{R}^d)} \le C2^{-j|\gamma|}\sup_{|\beta|=|\gamma|} \|\partial^\beta f\|_{L^p(\mathbb{R}^d)}.
$$
\end{enumerate}
\end{prop}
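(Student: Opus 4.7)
The approach is the classical one: use a smooth cutoff adapted to the Fourier localization of $f$ to represent $f$ as a convolution with a rescaled Schwartz kernel, and then invoke Young's convolution inequality together with a scaling computation.

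For part (1), since $\widehat{f}$ is supported in $\{|\xi|\le\mathcal{K}2^j\}$, I would pick a nonnegative radial $\psi\in C_c^\infty(\R^d)$ with $\psi\equiv 1$ on $\{|\xi|\le\mathcal{K}\}$, so that $\widehat{f}(\xi)=\psi(2^{-j}\xi)\widehat{f}(\xi)$. Taking the inverse Fourier transform gives $f=2^{jd}\Psi(2^j\cdot)*f$ with $\Psi:=\mathfrak{F}^{-1}\psi\in\mathcal{S}(\R^d)$, and hence $\partial^{\gamma}f=2^{jd+j|\gamma|}(\partial^{\gamma}\Psi)(2^j\cdot)*f$. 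Young's convolution inequality with exponents satisfying $1+\tfrac{1}{q}=\tfrac{1}{r}+\tfrac{1}{p}$, combined with the scaling identity $\|2^{jd}g(2^j\cdot)\|_{L^r}=2^{jd(1-1/r)}\|g\|_{L^r}$ and the observation $1-\tfrac{1}{r}=\tfrac{1}{p}-\tfrac{1}{q}$, produces the factor $2^{j|\gamma|+jd(1/p-1/q)}$, with a $j$-independent constant coming from $\|\partial^{\gamma}\Psi\|_{L^r}<\infty$.

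For part (2), I would pick a smooth radial $\phi\in C_c^\infty(\R^d\setminus\{0\})$ equal to $1$ on $\{\mathcal{K}_1\le|\eta|\le\mathcal{K}_2\}$, so that $\widehat{f}(\xi)=\phi(2^{-j}\xi)\widehat{f}(\xi)$. Using the elementary identity $|\xi|^{2|\gamma|}=\sum_{|\beta|=|\gamma|}\tfrac{|\gamma|!}{\beta!}\xi^{2\beta}$ together with the cutoff, I rewrite
\[
\widehat{f}(\xi)=\sum_{|\beta|=|\gamma|}\frac{|\gamma|!}{\beta!}\,\frac{\phi(2^{-j}\xi)\,\xi^{\beta}}{|\xi|^{2|\gamma|}}\,\xi^{\beta}\widehat{f}(\xi).
\]
Because $\phi$ vanishes near the origin, the symbol $m_{\beta}(\eta):=\phi(\eta)\eta^{\beta}/|\eta|^{2|\gamma|}$ is smooth and compactly supported, so $M_{\beta}:=\mathfrak{F}^{-1}m_{\beta}\in\mathcal{S}(\R^d)\subset L^1(\R^d)$ with a $j$-independent norm. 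Recognising $\xi^{\beta}\widehat{f}$ (up to a factor of $i^{|\gamma|}$) as $\widehat{\partial^{\beta}f}$, and tracking the $2^{-j|\gamma|}$ that falls out of the rescaling $m_{\beta}(2^{-j}\xi)$, Fourier inversion combined with Young's inequality delivers the desired bound $\|f\|_{L^p}\le C\,2^{-j|\gamma|}\max_{|\beta|=|\gamma|}\|\partial^{\beta}f\|_{L^p}$.

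The only mildly delicate point is part (2), where one must verify that the singularity of $1/|\xi|^{2|\gamma|}$ at the origin does no damage; this is guaranteed precisely by the fact that the cutoff $\phi$ vanishes in a neighbourhood of $0$. Once this is observed, each $M_\beta$ is Schwartz with a $j$-independent $L^1$ norm, and all the $j$-dependence of the estimate is produced by the explicit scaling factor $2^{-j|\gamma|}$.
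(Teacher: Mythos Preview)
Your argument is correct and is precisely the classical proof of the Bernstein inequalities. Note, however, that the paper does not actually supply a proof of this proposition: it is stated as a standard result, with the reader referred to \cite{BCD,Grafakos,Stein} for details. The approach you outline---reproducing $f$ via convolution with a rescaled Schwartz kernel adapted to the Fourier support and then applying Young's inequality with the appropriate scaling---is exactly the proof one finds in those references, so there is nothing to compare.
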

For more details about Besov space  such as  some useful embedding relations, see \cite{BCD,Grafakos,Stein}.

\begin{lemma}\label{l1}\cite{Danchin}
Let $1<p<\infty,$ $supp \widehat{u}\subset C(0,R_{1},R_{2})$ (with $0<R_{1}<R_{2}$). There exists a constant $c$ depending on $\frac{R_{2}}{R_{1}}$ and such that
\begin{equation}\label{Ber}
c\frac{R_{1}^2}{p^2}\int_{\mathbb{R}^3}|u|^p dx\le -\frac{1}{p-1}\int_{\mathbb{R}^3} \Delta u |u|^{p-2}u dx.
\end{equation}
\end{lemma}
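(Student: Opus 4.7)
The idea is to integrate by parts to rewrite the right-hand side as a gradient-squared integral, and then use the Fourier support of $u$ to bound $\|u\|_{L^p}^p$ from above by this integral.

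First, since $u$ is band-limited it is smooth and lies in every $L^q(\R^3)$, so the integration by parts below is justified. For vector-valued $u$ one computes
\[
\int_{\R^3}\nabla u:\nabla(|u|^{p-2}u)\,dx = \int|u|^{p-2}|\nabla u|^2\,dx + (p-2)\int|u|^{p-2}\bigl|\nabla|u|\bigr|^2\,dx,
\]
and using Kato's inequality $\bigl|\nabla|u|\bigr|\le|\nabla u|$ one deduces, for any $p>1$,
\[
-\int_{\R^3}\Delta u\cdot|u|^{p-2}u\,dx \ge (p-1)\int|u|^{p-2}|\nabla u|^2\,dx.
\]
Combining this with the pointwise bound $|\nabla u|\ge\bigl|\nabla|u|\bigr|$ and the chain rule $\nabla|u|^{p/2}=\tfrac{p}{2}|u|^{p/2-1}\nabla|u|$ yields
\[
-\frac{1}{p-1}\int\Delta u\cdot|u|^{p-2}u\,dx \ge \frac{4}{p^2}\int\bigl|\nabla|u|^{p/2}\bigr|^2\,dx.
\]

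It thus suffices to establish the Poincar\'e-type estimate $cR_1^2\bigl\|\,|u|^{p/2}\bigr\|_{L^2}^2\le\bigl\|\nabla|u|^{p/2}\bigr\|_{L^2}^2$ with $c=c(R_2/R_1)$. Since $v:=|u|^{p/2}$ is not itself Fourier-localized, one invokes the spectral support of $u$ indirectly. A convenient route is via the heat semigroup: let $u(t):=e^{t\Delta}u$, whose Fourier support is preserved; write $u(t)=K_t*u$ with $K_t=\mathcal{F}^{-1}(e^{-t|\xi|^2}\tilde\varphi(\xi))$ for a fixed smooth bump $\tilde\varphi$ equal to $1$ on the Fourier support of $u$; and, after rescaling to $R_1=1$, estimate the $L^1$-norm of this kernel to obtain $\|u(t)\|_{L^p}\le e^{-ctR_1^2}\|u\|_{L^p}$ for $t>0$ small. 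Differentiating $\|u(t)\|_{L^p}^p$ at $t=0$ and using $u_t=\Delta u$ then yields
\[
-p(p-1)\int|u|^{p-2}|\nabla u|^2\,dx\le -cpR_1^2\|u\|_{L^p}^p,
\]
from which the claim follows after rearrangement (since $1/(p-1)\ge 1/p^2$ for $p>1$, the stated factor $1/p^2$ is absorbed into the universal constant).

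The main obstacle is obtaining the semigroup bound with leading constant exactly $1$, rather than some $C>1$, since otherwise differentiation at $t=0$ produces an $O(1)$ error that wipes out the coercivity. The required sharpness demands a careful choice of the bump $\tilde\varphi$ and is the delicate technical point; an alternative route that bypasses the issue is to decompose $|u|^{p/2}$ via Littlewood--Paley and bound its low-frequency part through commutator estimates for Fourier multipliers combined with the standard Bernstein lower bound $R_1\|u\|_{L^p}\le C\|\nabla u\|_{L^p}$.
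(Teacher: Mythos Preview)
The paper does not give a proof of this lemma: it is quoted directly from Danchin \cite{Danchin}, with no argument supplied. So there is nothing in the paper to compare your argument against.

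On its own merits, your reduction is correct and standard: integrating by parts yields
\[
-\frac{1}{p-1}\int_{\R^3}\Delta u\,|u|^{p-2}u\,dx\ \ge\ \frac{4}{p^2}\bigl\|\nabla|u|^{p/2}\bigr\|_{L^2}^2,
\]
and the whole lemma then reduces to the Poincar\'e-type bound $c\,R_1^2\bigl\||u|^{p/2}\bigr\|_{L^2}^2\le\bigl\|\nabla|u|^{p/2}\bigr\|_{L^2}^2$. The difficulty is precisely that $|u|^{p/2}$ is not spectrally localized, and this is where your proposal stalls.

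Your suggested route --- obtain $\|e^{t\Delta}u\|_{L^p}\le e^{-ctR_1^2}\|u\|_{L^p}$ with leading constant exactly $1$ by bounding $\|K_t\|_{L^1}$ for $K_t=\mathcal{F}^{-1}\bigl(e^{-t|\xi|^2}\tilde\varphi(\xi)\bigr)$ --- cannot work. Since $\tilde\varphi$ must equal $1$ on the annulus $\{R_1\le|\xi|\le R_2\}$ and vanish at the origin, one has $\widehat{K_0}(0)=\tilde\varphi(0)=0$, so $K_0$ changes sign and $\|K_0\|_{L^1}>1$ strictly, regardless of how the bump is chosen. Hence $\|K_t\|_{L^1}\le e^{-ctR_1^2}$ fails for small $t$, and differentiating at $t=0$ gives nothing. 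You acknowledge this obstruction, but neither the ``careful choice of $\tilde\varphi$'' nor the alluded Littlewood--Paley/commutator alternative is carried out; both remain at the level of a sketch. In Danchin's original argument the semigroup decay $\|e^{t\Delta}u\|_{L^p}\le Ce^{-ctR_1^2}\|u\|_{L^p}$ is used only with a constant $C>1$, and the loss is absorbed by integrating the time-derivative of $\|e^{t\Delta}u\|_{L^p}^p$ over a fixed interval $[0,T_0]$ with $T_0\sim R_1^{-2}$, rather than by differentiating at a single instant. That integrated argument is what is missing from your proposal.
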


\begin{prop}\label{l2}
Let $\phi\in \mathcal{S}(\R^3)$, whose Fourier transform supported in annulus contained in $\R^3\setminus\{0\}$, and $p>3$. If $u_0=(\partial_2 \phi, -\partial_1\phi,0)$ and
$B_0=2\sin^2\frac{x_3}{2\epsilon}(\partial_2 \phi, -\partial_1\phi,0),$ then there exists a constant $C_1,C_2>0$ such that
$$\|u_0\|_{\dot{B}_{p,r}^{\frac{3}{p}-1}}\ge C_1,\ \ \|B_0\|_{\dot{B}_{p,r}^{\frac{3}{p}-1}}\ge \frac{C_1}{2}$$
and
$$\|u_0-B_0\|_{\dot{B}_{p,r}^{\frac{3}{p}-1}}\le C_2\epsilon^{1-\frac{3}{p}},$$
here $\epsilon$ is sufficiently small.
\end{prop}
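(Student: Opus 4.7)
The plan is to exploit two structural features: (a) $\widehat{\phi}$, hence $\widehat{u_0}$, is supported in a fixed annulus independent of $\epsilon$, so $u_0$ lives at a bounded range of Littlewood--Paley scales; and (b) the factor $2\sin^2\frac{x_3}{2\epsilon}=1-\cos\frac{x_3}{\epsilon}$ shifts the Fourier support of $u_0$ by $\pm e_3/\epsilon$, pushing the difference $u_0-B_0$ to frequencies of order $1/\epsilon$, where the negative-index Besov norm picks up a small factor $\epsilon^{1-3/p}$.

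First I would verify the bound $\|u_0\|_{\dot{B}_{p,r}^{3/p-1}}\ge C_1$. Since $\widehat{u_0}(\xi)=(i\xi_2,-i\xi_1,0)\widehat{\phi}(\xi)$ has the same annular support $\{R_1\le|\xi|\le R_2\}$ as $\widehat\phi$, only a bounded number of blocks $\Delta_j u_0$ are nonzero (those with $2^j\sim 1$). Choosing any such $j_*$ with $\Delta_{j_*}u_0\not\equiv 0$ gives
$$
\|u_0\|_{\dot{B}_{p,r}^{3/p-1}}\ \ge\ 2^{j_*(\frac{3}{p}-1)}\|\Delta_{j_*}u_0\|_{L^p}\ =:\ C_1>0,
$$
with $C_1$ independent of $\epsilon$.

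Next I would write
$$
u_0-B_0=\cos\tfrac{x_3}{\epsilon}\,u_0\ =:\ g_\epsilon,
$$
and study the Fourier support of $g_\epsilon$. Since $\widehat{g_\epsilon}(\xi)=\tfrac12\widehat{u_0}(\xi-e_3/\epsilon)+\tfrac12\widehat{u_0}(\xi+e_3/\epsilon)$ is supported in $\{R_1\le|\xi\pm e_3/\epsilon|\le R_2\}$, for $\epsilon$ small enough this set lies in an annulus $\{c_1/\epsilon\le|\xi|\le c_2/\epsilon\}$. Hence $\Delta_j g_\epsilon\equiv 0$ except for a bounded number of indices $j$ clustered around $j_0$ with $2^{j_0}\sim 1/\epsilon$. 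Combining this with the trivial bound $\|\Delta_j g_\epsilon\|_{L^p}\le C\|g_\epsilon\|_{L^p}\le C\|u_0\|_{L^p}$ (which is finite because $u_0\in\mathcal{S}$), I obtain
$$
\|g_\epsilon\|_{\dot{B}_{p,r}^{3/p-1}}\ \le\ C\,2^{j_0(\frac{3}{p}-1)}\|u_0\|_{L^p}\ \le\ C_2\,\epsilon^{1-\frac{3}{p}},
$$
which is exactly the third claimed inequality; note that $p>3$ ensures $1-3/p>0$, so this is small.

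Finally I would conclude the lower bound for $B_0$ by the reverse triangle inequality applied to $B_0=u_0-g_\epsilon$:
$$
\|B_0\|_{\dot{B}_{p,r}^{3/p-1}}\ \ge\ \|u_0\|_{\dot{B}_{p,r}^{3/p-1}}-\|g_\epsilon\|_{\dot{B}_{p,r}^{3/p-1}}\ \ge\ C_1-C_2\epsilon^{1-\frac{3}{p}}\ \ge\ \tfrac{C_1}{2},
$$
for $\epsilon$ sufficiently small. The only delicate step is the Fourier-support analysis of $g_\epsilon$ in the third paragraph, but this is really just bookkeeping with the shift of an annulus; there is no analytical difficulty, only the care needed to track constants so that the final bound in $\epsilon$ is sharp in the exponent $1-3/p$.
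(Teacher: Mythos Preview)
Your proposal is correct and follows essentially the same approach as the paper: both obtain the lower bound on $u_0$ from a single nonvanishing Littlewood--Paley block (the paper routes this through the embedding $\dot{B}_{p,r}^{3/p-1}\hookrightarrow\dot{B}_{\infty,\infty}^{-1}$, while you stay in $\dot{B}_{p,r}^{3/p-1}$, an immaterial difference), and both derive the bound on $B_0$ by the reverse triangle inequality. For the estimate $\|u_0-B_0\|_{\dot{B}_{p,r}^{3/p-1}}\le C_2\epsilon^{1-3/p}$ the paper simply cites Lemma~3.1 of Chemin--Gallagher, whereas you spell out the Fourier-shift argument directly; your computation is exactly what underlies that reference.
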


\begin{proof}
The last estimate can be obtained by following  the proof of Lemma 3.1 in \cite{Chemin 2}. So we suffice to show both
$\|u_0\|_{\dot{B}_{p,r}^{\frac{3}{p}-1}}$ and $\|B_0\|_{\dot{B}_{p,r}^{\frac{3}{p}-1}}$ has positive lower bound. With this $\phi$, there exists a finite
$j_0\in\Z$, such that $\Delta_{j_0}\partial_2\phi\neq0$, which implies
$$\|\Delta_{j_0} \partial_2\phi\|_{L^\infty}\ge \epsilon_0$$
for some positive constant $\epsilon_0$. Thanks to this, by Bernstein's inequality, we have
$$\|u_0\|_{\dot{B}_{p,r}^{\frac{3}{p}-1}}\ge \|u_0\|_{\dot{B}_{\infty,\infty}^{-1}}\ge 2^{-j_0}\|\Delta_{j_0}\partial_2\phi\|_{L^\infty}\ge 2^{-j_0}\epsilon_0,$$
and by triangle inequality
$$\|B_0\|_{\dot{B}_{p,r}^{\frac{3}{p}-1}}\ge \|u_0\|_{\dot{B}_{p,r}^{\frac{3}{p}-1}}-\|u_0-B_0\|_{\dot{B}_{p,r}^{\frac{3}{p}-1}}\ge
2^{-j_0}\epsilon_0-C_2\epsilon^{1-\frac{3}{p}}\ge 2^{-j_0-1}\epsilon_0
$$
due to the sufficient small $\epsilon$. Choosing $C_1=2^{-j_{0}}\epsilon_0$ yields the desired result.
\end{proof}

For some convenience, we provide the following definition of $\chi^{s}(\R^d)$,
$$\|f\|_{\chi^s}:=\int_{\R^d} |\xi|^s |\hat{f}(\xi)| d\xi,$$
and we refer \cite{LL} for some details.

\begin{prop}\label{l33}
Let $f\in \chi^{-1}$, then we have
$$\|f\|_{\dot{B}_{\infty,r}^{-1}}\le \|f\|_{\dot{B}_{\infty,1}^{-1}}\le \|f\|_{\mathbb{{B}}_{1,1}^{-1}}\thickapprox \|f\|_{\chi^{-1}},$$
where
$$\|f\|_{\mathbb{B}_{1,1}^{-1}}:=\sum_{j\in\Z}2^{-j}\|\widehat{\Delta_j f}\|_{L^1}.$$
\end{prop}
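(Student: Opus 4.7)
The plan is to prove the three comparisons in sequence, moving from the Besov side toward the $\chi^{-1}$ side.

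For the first inequality $\|f\|_{\dot{B}_{\infty,r}^{-1}} \le \|f\|_{\dot{B}_{\infty,1}^{-1}}$, I would simply invoke the standard monotonicity of $\ell^r$ norms in the summability index: since $r \ge 1$, $\|(a_j)\|_{\ell^r} \le \|(a_j)\|_{\ell^1}$, applied to $a_j = 2^{-j}\|\Delta_j f\|_{L^\infty}$. This is a one-line observation and requires no work.

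For the second inequality, I would use that each $\Delta_j f$ is the inverse Fourier transform of $\varphi(2^{-j}\cdot)\widehat{f}$, and by the basic $L^\infty$-$L^1$ duality of the Fourier transform, $\|\Delta_j f\|_{L^\infty} \le C\|\widehat{\Delta_j f}\|_{L^1}$ (with $C$ depending only on the normalization convention). Multiplying by $2^{-j}$ and summing over $j \in \Z$ gives $\|f\|_{\dot{B}_{\infty,1}^{-1}} \le C\|f\|_{\mathbb{B}_{1,1}^{-1}}$.

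The main (and only nontrivial) step is the equivalence $\|f\|_{\mathbb{B}_{1,1}^{-1}} \thickapprox \|f\|_{\chi^{-1}}$. Here I would write
\begin{equation*}
\sum_{j\in\Z} 2^{-j}\|\widehat{\Delta_j f}\|_{L^1} = \sum_{j\in\Z} 2^{-j}\int_{\R^3}\varphi(2^{-j}\xi)|\widehat{f}(\xi)|\,d\xi,
\end{equation*}
and exploit that $\varphi$ is supported in the annulus $\mathfrak{C}=\{3/4\le|\xi|\le 8/3\}$, so on the support of $\varphi(2^{-j}\cdot)$ we have $|\xi|\sim 2^j$; thus $2^{-j}$ and $|\xi|^{-1}$ are comparable uniformly in $j$. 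Replacing $2^{-j}$ by $|\xi|^{-1}$ inside the integral, swapping the sum and integral by Tonelli, and using $\sum_{j\in\Z}\varphi(2^{-j}\xi)=1$ for $\xi\ne 0$, the right side becomes $\int_{\R^3} |\xi|^{-1}|\widehat{f}(\xi)|\,d\xi = \|f\|_{\chi^{-1}}$ up to a universal multiplicative constant. The reverse inequality is obtained by the same computation running the estimates in the other direction. I do not anticipate any genuine obstacle; the only technical point to watch is keeping track of the finite overlap of the annuli $\{\varphi(2^{-j}\cdot)\ne 0\}$ (at most a bounded number of indices $j$ are active for each $\xi$), which only affects the implicit constant.
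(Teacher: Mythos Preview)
Your proof is correct and follows essentially the same approach as the paper. Both handle the first two inequalities by $\ell^r$-monotonicity and the bound $\|g\|_{L^\infty}\le\|\hat g\|_{L^1}$, and both prove the equivalence $\|f\|_{\mathbb{B}_{1,1}^{-1}}\thickapprox\|f\|_{\chi^{-1}}$ by replacing $2^{-j}$ with $|\xi|^{-1}$ on the support of $\varphi(2^{-j}\cdot)$, interchanging sum and integral (the paper says Monotone Convergence, you say Tonelli), and using $\sum_{j}\varphi(2^{-j}\xi)=1$ with $\varphi\ge 0$.
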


\begin{proof}
The first inequality is obvious, while the second inequality can be proved by using
$\|f\|_{L^\infty}\le \|\hat{f}\|_{L^1}.$ Now, we prove  $\|f\|_{\mathbb{{B}}_{1,1}^{-1}}\thickapprox \|f\|_{\chi^{-1}}$.
By the definition of $\Delta_j$,  and using Monotone Convergence Theorem,
\begin{equation*}
\begin{aligned}
\|f\|_{\mathbb{{B}}_{1,1}^{-1}}=&\sum_{j\in \Z}2^{-j}\| \varphi(2^{-j}\xi)\hat{f}(\xi)\|_{L^1}\\
\thickapprox&
\sum_{j\in \Z}\| |\xi|^{-1}\varphi(2^{-j}\xi)\hat{f}(\xi)\|_{L^1}\\
=&\||\xi|^{-1}|\hat{f}(\xi)|\|_{L^1}\\
=&\|f\|_{\chi^{-1}},
\end{aligned}
\end{equation*}
where we have used $\sum_{j\in Z}\varphi(2^{-j}\xi)=1$ and $\varphi\ge 0$.
\end{proof}

\begin{lemma}\label{l3}
($i$) Let $(p,r)\in [1,\infty)\times[1,\infty],$ ${\rm div}u=0$, then
\begin{equation}\label{SKP1}
\|u\cdot\nabla v\|_{\tilde{L}^1_t(\dot{B}_{p,r}^{\frac{3}{p}-1})}\le C\left(\|u\|_{\tilde{L}^\infty_t(\dot{B}_{p,r}^{\frac{3}{p}-1})}\|v\|_{\tilde{L}^1_t(\dot{B}_{p,r}^{\frac{3}{p}+1})}+
\|v\|_{\tilde{L}^\infty_t(\dot{B}_{p,r}^{\frac{3}{p}-1})}\|u\|_{\tilde{L}^1_t(\dot{B}_{p,r}^{\frac{3}{p}+1})}\right);
\end{equation}
($ii$) Let $(p,r)\in [1,\infty)\times[1,\infty],$ ${\rm div}u=0$, then
\begin{equation}\label{SKP2}
\|u\cdot\nabla v\|_{\tilde{L}^1_t(\dot{B}_{p,r}^{\frac{3}{p}-1})}\le
C\|v\|_{\tilde{L}^1_t(\dot{B}_{p,r}^{\frac{3}{p}+1})}^\frac{1+\epsilon}{2}\|v\|_{\tilde{L}^1_{t,f}(\dot{B}_{p,r}^{\frac{3}{p}-1})}^\frac{1-\epsilon}{2},
\end{equation}
where $0<\epsilon<1$ and $f=\|u\|_{\dot{B}_{p,\infty}^{\frac{3}{p}-1}}^\frac{2}{1-\epsilon}.$ In particular, (\ref{SKP2}) also holds when
$(\epsilon,r)=(0,1)$ and $f=\|u\|_{\dot{B}_{p,1}^\frac{3}{p}}^2.$
\end{lemma}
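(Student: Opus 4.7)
For part~(i), the plan is to exploit the divergence-free hypothesis, writing $u\cdot\nabla v=\partial_i(u_iv)$ and thereby reducing the estimate to $\|u\otimes v\|_{\tilde L^1_t(\dot B^{3/p}_{p,r})}$. I would then apply Bony's paraproduct decomposition $uv=T_uv+T_vu+R(u,v)$, controlling each piece in $\dot B^{3/p}_{p,r}$ by the standard Bernstein inequality $\|S_{k-1}f\|_{L^\infty}\lesssim 2^{k}\|f\|_{\dot B^{3/p-1}_{p,\infty}}$ together with the embedding $\dot B^{3/p-1}_{p,r}\hookrightarrow\dot B^{3/p-1}_{p,\infty}$. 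Lifting these pointwise bounds to Chemin--Lerner norms is accomplished by the time-Hölder pairing $L^\infty_t\cdot L^1_t\hookrightarrow L^1_t$ applied separately to the two factors, which yields~\eqref{SKP1} directly.

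For part~(ii), I plan to combine the same Bony decomposition with an interpolation in the time variable. The first step is a dyadic pointwise estimate: using $\|S_{k-1}u\|_{L^\infty}\lesssim 2^{k}\|u\|_{\dot B^{3/p-1}_{p,\infty}}$ and Bernstein on $\Delta_k\nabla v$, one obtains (up to $\ell^r_j$-summable coefficients coming from $T_u\nabla v$, and after rewriting the reverse paraproduct via the divergence-free identity $\sum_i T_{\partial_iv}u_i=\mathrm{div}(T_vu)$)
\[
2^{j(3/p-1)}\|\Delta_j(u\cdot\nabla v)(\tau)\|_{L^p}\lesssim \|u(\tau)\|_{\dot B^{3/p-1}_{p,\infty}}\cdot 2^{j(3/p+1)}\|\Delta_jv(\tau)\|_{L^p}.
\]
The next step is a splitting: I would write $\|u(\tau)\|=f(\tau)^{(1-\epsilon)/2}$ and factor the $v$-part using the identity $a=a^{(1+\epsilon)/2}\cdot a^{(1-\epsilon)/2}$, with the $(1-\epsilon)/2$-factor rewritten in terms of $2^{j(3/p-1)}\|\Delta_jv\|_{L^p}$. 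Hölder in $\tau$ with conjugate exponents $\tfrac{2}{1+\epsilon},\tfrac{2}{1-\epsilon}$ then separates the time integral into
\[
\Bigl(\int_0^t 2^{j(3/p+1)}\|\Delta_jv(\tau)\|_{L^p}d\tau\Bigr)^{\!(1+\epsilon)/2}\Bigl(\int_0^t f(\tau)\cdot 2^{j(3/p-1)}\|\Delta_jv(\tau)\|_{L^p}d\tau\Bigr)^{\!(1-\epsilon)/2},
\]
and a final $\ell^r_j$-Hölder, again with the pair $\tfrac{2}{1+\epsilon},\tfrac{2}{1-\epsilon}$, converts this into the product of the two $\tilde L^1$-type norms in~\eqref{SKP2}.

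The main obstacle I anticipate lies in the remainder term $R(u,\nabla v)$ and in the reverse paraproduct $T_{\nabla v}u$: both produce high-frequency sums of the form $\sum_{k\geq j-N}2^{-3(k-j)/p}(\cdots)$ whose convergence in $\ell^r_j$ is precisely what forces the admissible range~\eqref{1.16} of $(\epsilon,r)$. The divergence-free identity used above is indispensable here, since it trades one derivative off $v$ in the reverse paraproduct and thereby brings its contribution back to the scale of the $T_u\nabla v$ piece. Finally, the endpoint $(\epsilon,r)=(0,1)$ is not covered by the Bernstein bound on $\dot B^{3/p-1}_{p,\infty}$ (which fails to embed into $L^\infty$); in that regime the critical embedding $\dot B^{3/p}_{p,1}\hookrightarrow L^\infty$ takes its place, accounting for the modified weight $f=\|u\|_{\dot B^{3/p}_{p,1}}^{2}$ in the ``in particular'' statement.
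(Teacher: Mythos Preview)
Part~(i) is fine and matches the paper: both reduce to the standard pointwise product estimate via Bony's decomposition and then lift it to Chemin--Lerner norms by a time H\"older $L^\infty\cdot L^1\hookrightarrow L^1$.

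Part~(ii) has a genuine scaling gap. From your pointwise bound
\[
2^{j(3/p-1)}\|\Delta_j(u\cdot\nabla v)(\tau)\|_{L^p}\lesssim \|u(\tau)\|_{\dot B^{3/p-1}_{p,\infty}}\cdot 2^{j(3/p+1)}\|\Delta_jv(\tau)\|_{L^p},
\]
the H\"older-in-$\tau$ split with exponents $\tfrac{2}{1+\epsilon},\tfrac{2}{1-\epsilon}$ does \emph{not} produce the displayed product. Writing $A_j=2^{j(3/p+1)}\|\Delta_jv\|_{L^p}$ and $B_j=2^{j(3/p-1)}\|\Delta_jv\|_{L^p}=2^{-2j}A_j$, H\"older gives
\[
\int_0^t \|u\|_{\dot B^{3/p-1}_{p,\infty}}A_j\,d\tau
\le\Bigl(\int_0^t A_j\Bigr)^{\frac{1+\epsilon}{2}}\Bigl(\int_0^t f\,A_j\Bigr)^{\frac{1-\epsilon}{2}}
=2^{j(1-\epsilon)}\Bigl(\int_0^t A_j\Bigr)^{\frac{1+\epsilon}{2}}\Bigl(\int_0^t f\,B_j\Bigr)^{\frac{1-\epsilon}{2}},
\]
so an uncontrolled factor $2^{j(1-\epsilon)}$ remains, and the $\ell^r_j$ step cannot close. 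The ``rewriting the $(1-\epsilon)/2$-factor in terms of $2^{j(3/p-1)}\|\Delta_jv\|_{L^p}$'' is exactly where this factor is generated.

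The paper avoids this by using a different Bernstein bound on the low-frequency factor, namely
\[
\|S_{j-1}u\|_{L^\infty}\lesssim 2^{j\epsilon}\|u\|_{\dot B^{-\epsilon}_{\infty,\infty}}\lesssim 2^{j\epsilon}\|u\|_{\dot B^{3/p-\epsilon}_{p,\infty}},
\]
which places the integrand at the scale $2^{j(3/p+\epsilon)}\|\Delta_jv\|_{L^p}$. This is precisely the interpolation point
\[
2^{j(3/p+\epsilon)}=\bigl(2^{j(3/p+1)}\bigr)^{\frac{1+\epsilon}{2}}\bigl(2^{j(3/p-1)}\bigr)^{\frac{1-\epsilon}{2}},
\]
so the H\"older split now lands exactly on the two norms in~\eqref{SKP2} with weight $f=\|u\|_{\dot B^{3/p-\epsilon}_{p,\infty}}^{2/(1-\epsilon)}$. (The index $\tfrac{3}{p}-1$ in the statement of the lemma is a misprint; both the paper's proof and its application in Section~\ref{s3} use $\tfrac{3}{p}-\epsilon$.) The same $\epsilon$-dependent bound, together with a discrete Young inequality, handles $T_{\partial_iv}u_i$ and $R(u,\nabla v)$ directly---no divergence-free rewriting of the reverse paraproduct is needed. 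Your reasoning for the endpoint $(\epsilon,r)=(0,1)$ is correct and is exactly what the paper does.
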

For the proof, we shall use homogeneous Bony's decomposition:
$$uv=T_uv+T_vu+R(u,v),$$
where
$$T_uv=\sum_{j\in \mathbb{Z}}S_{j-1}u\Delta_{j}v,\ \ T_vu=\sum_{j\in \mathbb{Z}}\Delta_{j}uS_{j-1}v,\ \
R(u,v)=\sum_{j\in\mathbb{Z}}\Delta_{j}u\tilde{\Delta}_{j}v,$$
here $\tilde{\Delta}_{j}=\Delta_{j-1}+\Delta_{j}+\Delta_{j+1}.$
\begin{proof}
The estimate of (\ref{SKP1}) can be established by using
$$\|u\cdot\nabla v\|_{\dot{B}_{p,r}^{\frac{3}{p}-1}}\le C\{\|u\|_{\dot{B}_{p,r}^{\frac{3}{p}-1}}\|v\|_{\dot{B}_{p,r}^{\frac{3}{p}+1}}+
\|v\|_{\dot{B}_{p,r}^{\frac{3}{p}-1}}\|u\|_{\dot{B}_{p,r}^{\frac{3}{p}+1}}\},
$$
whose proof is standard. Thus the goal  is the estimate of (\ref{SKP2}). By homogeneous Bony's decomposition,
\begin{equation}\label{2.4}
\begin{aligned}
\|u\cdot\nabla v\|_{\tilde{L}^1_t(\dot{B}_{p,r}^{\frac{3}{p}-1})}\le& \|T_{u_i}\partial_i v\|_{\tilde{L}^1_t(\dot{B}_{p,r}^{\frac{3}{p}-1})}
+\|T_{\partial_i v}u_i\|_{\tilde{L}^1_t(\dot{B}_{p,r}^{\frac{3}{p}-1})}+\|R(u,\nabla v)\|_{\tilde{L}^1_t(\dot{B}_{p,r}^{\frac{3}{p}-1})}\\
:=&I_1+I_2+I_3.
\end{aligned}
\end{equation}
Let $\theta=\frac{1-\epsilon}{2}$, $0<\epsilon<1$. For $I_1$, using H\"{o}lder's inequality and Bernstein's inequality,
\begin{equation*}
\begin{aligned}
I_1\le& \left\|2^{j(\frac{3}{p}-1)}\sum_{|k-j|\le 4}\|\Delta_j(S_{k-1}u\cdot\nabla \Delta_k v)\|_{L^1_tL^p}\right\|_{l^r(\Z)}\\
\le& C\left\|2^{j(\frac{3}{p}-1)}\|S_{j-1}u\cdot\nabla \Delta_j v\|_{L^1_tL^p}\right\|_{l^r(\Z)}\\
\le& C\left\|2^{j(\frac{3}{p}-1)}\int_0^t \|S_{j-1}u\|_{L^\infty}\|\nabla \Delta_j v\|_{L^p}d\tau\right\|_{l^r(\Z)}\\
\le& C\left\|2^{j(\frac{3}{p}+\epsilon)}\int_0^t \|u\|_{\dot{B}_{\infty,\infty}^{-\epsilon}}\| \Delta_j v\|_{L^p}d\tau\right\|_{l^r(\Z)}\\
\le& C\left\|2^{j(\frac{3}{p}+\epsilon)}\int_0^t \|u\|_{\dot{B}_{p,\infty}^{\frac{3}{p}-\epsilon}}\| \Delta_j v\|_{L^p}d\tau\right\|_{l^r(\Z)}\\
\le& C\left\|2^{j(\frac{3}{p}+1)(1-\theta)}\|\Delta_j v\|_{L^1_tL^p}^{1-\theta}
(\int_0^t 2^{j(\frac{3}{p}-1)} \|u\|_{\dot{B}_{p,\infty}^{\frac{3}{p}-\epsilon}}^\frac{1}{\theta}\| \Delta_j v\|_{L^p}d\tau)^\theta\right\|_{l^r(\Z)}\\
\le& C\|v\|_{\tilde{L}^1_t(\dot{B}_{p,r}^{\frac{3}{p}+1})}^{1-\theta}\|v\|_{\tilde{L}^1_{t,f}(\dot{B}_{p,r}^{\frac{3}{p}-1})}^\theta
= C\|v\|_{\tilde{L}^1_t(\dot{B}_{p,r}^{\frac{3}{p}+1})}^\frac{1+\epsilon}{2}\|v\|_{\tilde{L}^1_{t,f}(\dot{B}_{p,r}^{\frac{3}{p}-1})}^\frac{1-\epsilon}{2},
\end{aligned}
\end{equation*}
here $f=\|u\|_{\dot{B}_{p,\infty}^{\frac{3}{p}-1}}^\frac{2}{1-\epsilon}$ and we have used
$$\left\|2^{js}\|S_j u\|_{L^p}\right\|_{l^r(\Z)}\approx \|u\|_{\dot{B}_{p,r}^{s}},\ \forall\ s<0.$$
Similarly, for $I_2$, by H\"{o}lder's inequality and Bernstein's inequality,
\begin{equation*}
\begin{aligned}
I_2\le& \left\|2^{j(\frac{3}{p}-1)}\sum_{|k-j|\le 4}\|\Delta_j(\Delta_{k}u\cdot\nabla S_{k-1} v)\|_{L^1_tL^p}\right\|_{l^r(\Z)}\\
\le& C\left\|2^{j(\frac{3}{p}-1)}\|\Delta_{j}u\cdot\nabla S_{j-1} v\|_{L^1_tL^p}\right\|_{l^r(\Z)}\\
\le& C\left\|2^{j(\frac{3}{p}-1)}\int_0^t \|\Delta_{j}u\|_{L^p}\|\nabla S_{j-1} v\|_{L^\infty} d\tau\right\|_{l^r(\Z)}\\
\le& C\left\|2^{j(\epsilon-1)}\int_0^t \|u\|_{\dot{B}_{p,\infty}^{\frac{3}{p}-\epsilon}}\sum_{j'\le j-2}2^{j'(\frac{3}{p}+1)}\| \Delta_{j'} v\|_{L^p} d\tau\right\|_{l^r(\Z)}\\
\le& C\left\|\sum_{j'\le j-2}2^{(j-j')(\epsilon-1)}\int_0^t \|u\|_{\dot{B}_{p,\infty}^{\frac{3}{p}-\epsilon}}2^{j'(\frac{3}{p}+\epsilon)}\| \Delta_{j'} v\|_{L^p} d\tau\right\|_{l^r(\Z)}\\
\le& C \left\|2^{j(\frac{3}{p}+\epsilon)}\int_0^t \|u\|_{\dot{B}_{p,\infty}^{\frac{3}{p}-\epsilon}}\| \Delta_{j} v\|_{L^p} d\tau\right\|_{l^r(\Z)}\\
\end{aligned}
\end{equation*}
where we have used Young's inequality for series for the last inequality, i.e.,
$$\left\|\sum_{j'\le j-2}2^{(j-j')(\epsilon-1)}c_{j'}\right\|_{l^r(\Z)}\le C\|2^{j(\epsilon-1)}{\bf 1}_{j\ge 2}\|_{l^1(\Z)}\|c_j\|_{l^r(\Z)}\le C\|c_j\|_{l^r(\Z)}.$$
Following the same argument as $I_1$, one gets
$$I_2\le C\|v\|_{\tilde{L}^1_t(\dot{B}_{p,r}^{\frac{3}{p}+1})}^\frac{1+\epsilon}{2}\|v\|_{\tilde{L}^1_{t,f}(\dot{B}_{p,r}^{\frac{3}{p}-1})}^\frac{1-\epsilon}{2}.$$
Finally, we bound $I_3$. By Bernstein's inequality, Young's inequality for series and H\"{o}lder's inequality, we have
\begin{equation*}
\begin{aligned}
I_3\le& \left\|2^{j(\frac{3}{p}-1)}\sum_{k\ge j-3}\|\Delta_j(\Delta_{k}u\cdot\nabla \tilde{\Delta}_k v)\|_{L^1_tL^p}\right\|_{l^r(\Z)}\\
\le& C\left\|2^{j\frac{3}{p}}\sum_{k\ge j-3}\|\Delta_j(\Delta_{k}u\otimes \tilde{\Delta}_k v)\|_{L^1_tL^p}\right\|_{l^r(\Z)}\\
\le& C\left\|\sum_{k\ge j-3}2^{(j-k)\frac{3}{p}}2^{k\frac{3}{p}}\|\Delta_j(\Delta_{k}u\otimes \tilde{\Delta}_k v)\|_{L^1_tL^p}\right\|_{l^r(\Z)}\\
\le& C\left\|2^{k\frac{3}{p}}\int_0^t \|\Delta_k u\|_{L^p}\|\tilde{\Delta}_k v\|_{L^\infty} d\tau \right\|_{l^r(\Z)}\ (p<\infty)\\
\le& C\left\|2^{k(\frac{3}{p}+\epsilon)}\int_0^t \| u\|_{\dot{B}_{p,\infty}^{\frac{3}{p}-\epsilon}}\|\tilde{\Delta}_k v\|_{L^p} d\tau \right\|_{l^r(\Z)},
\end{aligned}
\end{equation*}
and using the same way as the estimate of $I_1$ derives
$$I_3\le C\|v\|_{\tilde{L}^1_t(\dot{B}_{p,r}^{\frac{3}{p}+1})}^\frac{1+\epsilon}{2}\|v\|_{\tilde{L}^1_{t,f}(\dot{B}_{p,r}^{\frac{3}{p}-1})}^\frac{1-\epsilon}{2}.$$
Plugging the above estimates into (\ref{2.4}) leads the desired result (\ref{SKP2}).\\

In addition, if $r=1$, the estimate of $I_1$ can be replaced as follows:
\begin{equation*}
\begin{aligned}
I_1\le& C\left\|2^{j(\frac{3}{p}-1)}\int_0^t \|S_{j-1}u\|_{L^\infty}\|\nabla \Delta_j v\|_{L^p}d\tau\right\|_{l^r(\Z)}\\
\le& C\left\|2^{j\frac{3}{p}}\int_0^t \|u\|_{\dot{B}_{p,1}^\frac{3}{p}}\|\Delta_j v\|_{L^p}d\tau\right\|_{l^r(\Z)}\\
\le& C\left\|2^{j(\frac{3}{p}+1)\frac{1}{2}}\|\Delta_j v\|_{L^1_tL^p}^\frac{1}{2}(2^{j(\frac{3}{p}-1)}\int_0^t \|u\|_{\dot{B}_{p,1}^\frac{3}{p}}^2\|\Delta_j v\|_{L^p}d\tau)^\frac{1}{2}\right\|_{l^r}\\
\le& C\|v\|_{\tilde{L}^1_t(\dot{B}_{p,r}^{\frac{3}{p}+1})}^\frac{1}{2}\|v\|_{\tilde{L}^1_{t,f}(\dot{B}_{p,r}^{\frac{3}{p}-1})}^\frac{1}{2},
\end{aligned}
\end{equation*}
here $f=\|v\|_{\dot{B}_{p,1}^\frac{3}{p}}^2$. At the same time, one can get the new estimates of $I_2$ and $I_3$ with the similar procedure.
Thus we complete the proof of this lemma.
\end{proof}

\vskip .3in
\section{Proof of Theorem \ref{t1}}
\label{s3}

As the Remark \ref{r3}, it suffices to prove the Theorem \ref{t1} under (\ref{1.14}). One can get the local existence and uniqueness for
(\ref{1.1}) by using the standard argument on the Navier-Stokes equations, namely, there exists a $T^\star>0$, such that
$$(u,B)\in \tilde{C}([0,T^\star); \dot{B}_{p,r}^{\frac{3}{p}-1})\cap \tilde{L}^1([0,T^\star); \dot{B}_{p,r}^{\frac{3}{p}+1}).$$
Since the equivalence between (\ref{1.1}) and (\ref{1.12}), we will consider (\ref{1.12}) and suffice to prove $T^\star=\infty$.\\

Now, we begin the proof. Let us consider $0<\epsilon<1$ and $r\le \frac{2}{1-\epsilon}$, containing  all cases in (\ref{1.16})   except $(\epsilon,r)=(0,1).$ Define
\begin{equation}\label{3.1}
\bar{T}:=\sup\left\{t\in (0,T^\star):\ \|W^-\|_{\tilde{L}^\infty_t(\dot{B}_{p,r}^{\frac{3}{p}-1})}+\nu_+\|W^-\|_{\tilde{L}^1_t(\dot{B}_{p,r}^{\frac{3}{p}+1})}\le \epsilon_o \nu_+ \right\},
\end{equation}
where $\epsilon_0$ is small positive constant and will be determined later on.\\

{\bf Step 1. The estimate of $W^+$.}\ Consider the first equation in (\ref{1.12}), using (\ref{Ber}), we get
$$\frac{d}{dt}\|\Delta_j W^+\|_{L^p}+c\nu_+ 2^{2j}\|\Delta_j W^+\|_{L^p}\le C\|\Delta_j (W^-\cdot\nabla W^+)\|_{L^p}+C\nu_- 2^{2j}\|\Delta_j W^-\|_{L^p},$$
which yields by a standard procedure
\begin{equation*}
\begin{aligned}
\|W^+\|_{\tilde{L}^\infty_t(\dot{B}_{p,r}^{\frac{3}{p}-1})}&+c\nu_+\|W^+\|_{\tilde{L}^1_t(\dot{B}_{p,r}^{\frac{3}{p}+1})}\\
\le& 2\|W^+_0\|_{\dot{B}_{p,r}^{\frac{3}{p}-1}}+C\|W^-\cdot\nabla W^+\|_{\tilde{L}^1_t(\dot{B}_{p,r}^{\frac{3}{p}-1})}
+C\nu_-\|W^-\|_{\tilde{L}^1_t(\dot{B}_{p,r}^{\frac{3}{p}+1})}.
\end{aligned}
\end{equation*}
By (\ref{SKP1}) and (\ref{3.1}), we have for all $t\in (0,\bar{T}],$
\begin{equation*}
\begin{aligned}
\|W^+\|_{\tilde{L}^\infty_t(\dot{B}_{p,r}^{\frac{3}{p}-1})}+&c\nu_+\|W^+\|_{\tilde{L}^1_t(\dot{B}_{p,r}^{\frac{3}{p}+1})}
\le 2\|W^+_0\|_{\dot{B}_{p,r}^{\frac{3}{p}-1}}+C\nu_-\|W^-\|_{\tilde{L}^1_t(\dot{B}_{p,r}^{\frac{3}{p}+1})}\\
&+C(\|W^-\|_{\tilde{L}^\infty_t(\dot{B}_{p,r}^{\frac{3}{p}-1})}
\|W^+\|_{\tilde{L}^1_t(\dot{B}_{p,r}^{\frac{3}{p}+1})}+\|W^+\|_{\tilde{L}^\infty_t(\dot{B}_{p,r}^{\frac{3}{p}-1})}
\|W^-\|_{\tilde{L}^1_t(\dot{B}_{p,r}^{\frac{3}{p}+1})})\\
\le& 2\|W^+_0\|_{\dot{B}_{p,r}^{\frac{3}{p}-1}}+C\epsilon_0 ( \nu_+\|W^+\|_{\tilde{L}^1_t(\dot{B}_{p,r}^{\frac{3}{p}+1})}+
\|W^+\|_{\tilde{L}^\infty_t(\dot{B}_{p,r}^{\frac{3}{p}-1})})+C\epsilon_0\nu_-,\\
\end{aligned}
\end{equation*}
with the selection of $\epsilon_0<\min\{\frac{c}{2C},\frac{1}{2C}\}$ leads
\begin{equation}\label{3.2}
\|W^+\|_{\tilde{L}^\infty_t(\dot{B}_{p,r}^{\frac{3}{p}-1})}+c\nu_+\|W^+\|_{\tilde{L}^1_t(\dot{B}_{p,r}^{\frac{3}{p}+1})}
\le 4\|W^+_0\|_{\dot{B}_{p,r}^{\frac{3}{p}-1}}+2c\nu_-.
\end{equation}

{\bf Step 2. The estimate of $W^-$.}\ Denote
$$f(t):=\|W^+(t)\|_{\dot{B}_{p,\infty}^{\frac{3}{p}-\epsilon}}^\frac{2}{1-\epsilon},\ W_\lambda^\pm:=W^\pm\exp\{-\lambda\int_0^t f(\tau) d\tau\},
\ p_\lambda:=p\exp\{-\lambda\int_0^t f(\tau) d\tau\},$$
where $\lambda$ is large enough constant and will be determined later on. So we can rewrite the second equation in (\ref{1.12}) as
$$\partial_t W^-_\lambda+\lambda f(t)W^-_\lambda+W^+\cdot\nabla W^-_\lambda+\nabla p_\lambda-\nu_+\Delta W^-_\lambda=\nu_-\Delta W^+_\lambda.$$
By a similar procedure, we have
\begin{equation*}
\begin{aligned}
\|\Delta_j W^-_\lambda\|_{L^\infty_t L^p}+&\lambda \int_0^t f(\tau)\|\Delta_j W^-_\lambda\|_{L^p} d\tau+c\nu_+2^{2j}\|\Delta_j W^-_\lambda\|_{L^1_tL^p}\\
&\le \|\Delta_j W_0^-\|_{L^p}+C\|\Delta_j(W^+\cdot\nabla W^-_\lambda)\|_{L^1_tL^p}+C\nu_-2^{2j}\|\Delta_j W^+_\lambda\|_{L^1_tL^p}.
\end{aligned}
\end{equation*}
Then we obtain
\begin{equation*}
\begin{aligned}
\|W^-_\lambda\|_{\tilde{L}^\infty_t(\dot{B}_{p,r}^{\frac{3}{p}-1})}+&c\nu_+\|W^-_\lambda\|_{\tilde{L}^1_t(\dot{B}_{p,r}^{\frac{3}{p}+1})}
+\lambda \|W^-_\lambda\|_{\tilde{L}^1_{t,f}(\dot{B}_{p,r}^{\frac{3}{p}-1})}\\
&\le \|W_0^-\|_{\dot{B}_{p,r}^{\frac{3}{p}-1}}+C\nu_-\|W^+_\lambda\|_{\tilde{L}^1_t(\dot{B}_{p,r}^{\frac{3}{p}+1})}
+C\|W^+\cdot\nabla W^-_\lambda\|_{\tilde{L}^1_t(\dot{B}_{p,r}^{\frac{3}{p}-1})}.
\end{aligned}
\end{equation*}
Thanks to (\ref{SKP2}), and by Young's inequality, we obtain
\begin{equation}\label{3.3}
\begin{aligned}
&\ \ \ \|W^-_\lambda\|_{\tilde{L}^\infty_t(\dot{B}_{p,r}^{\frac{3}{p}-1})}+c\nu_+\|W^-_\lambda\|_{\tilde{L}^1_t(\dot{B}_{p,r}^{\frac{3}{p}+1})}
+\lambda \|W^-_\lambda\|_{\tilde{L}^1_{t,f}(\dot{B}_{p,r}^{\frac{3}{p}-1})}\\
\le& \|W_0^-\|_{\dot{B}_{p,r}^{\frac{3}{p}-1}}+C\nu_-\|W^+_\lambda\|_{\tilde{L}^1_t(\dot{B}_{p,r}^{\frac{3}{p}+1})}
+C\|W^-_\lambda\|_{\tilde{L}^1_t(\dot{B}_{p,r}^{\frac{3}{p}+1})}^\frac{1+\epsilon}{2}
\|W^-_\lambda\|_{\tilde{L}^1_{t,f}(\dot{B}_{p,r}^{\frac{3}{p}-1})}^\frac{1-\epsilon}{2}\\
\le& \|W_0^-\|_{\dot{B}_{p,r}^{\frac{3}{p}-1}}+C\nu_-\|W^+_\lambda\|_{\tilde{L}^1_t(\dot{B}_{p,r}^{\frac{3}{p}+1})}
+\frac{c\nu_+}{8}\|W^-_\lambda\|_{\tilde{L}^1_t(\dot{B}_{p,r}^{\frac{3}{p}+1})}\\
&+C\nu_+^{-\frac{1+\epsilon}{1-\epsilon}}\|W^-_\lambda\|_{\tilde{L}^1_{t,f}(\dot{B}_{p,r}^{\frac{3}{p}-1})}.
\end{aligned}
\end{equation}
Choosing $\lambda>2C\nu_+^{-\frac{1+\epsilon}{1-\epsilon}},$ absorbing the third and fourth term on the right hand side of last inequality by the left hand side in (\ref{3.3}) follows
\begin{equation*}
\begin{aligned}
\|W^-_\lambda\|_{\tilde{L}^\infty_t(\dot{B}_{p,r}^{\frac{3}{p}-1})}&+\frac{7c}{8}\nu_+\|W^-_\lambda\|_{\tilde{L}^1_t(\dot{B}_{p,r}^{\frac{3}{p}+1})}
+C\nu_+^{-\frac{1+\epsilon}{1-\epsilon}}\|W^-_\lambda\|_{\tilde{L}^1_{t,f}(\dot{B}_{p,r}^{\frac{3}{p}-1})}\\
\le& \|W_0^-\|_{\dot{B}_{p,r}^{\frac{3}{p}-1}}+C\nu_-\|W^+_\lambda\|_{\tilde{L}^1_t(\dot{B}_{p,r}^{\frac{3}{p}+1})}.
\end{aligned}
\end{equation*}
Obviously, using (\ref{3.2}), we have
\begin{equation*}
\begin{aligned}
\|W^-_\lambda\|_{\tilde{L}^\infty_t(\dot{B}_{p,r}^{\frac{3}{p}-1})}+c\nu_+\|W^-_\lambda\|_{\tilde{L}^1_t(\dot{B}_{p,r}^{\frac{3}{p}+1})}
\le&2\|W_0^-\|_{\dot{B}_{p,r}^{\frac{3}{p}-1}}+C\nu_-\|W^+_\lambda\|_{\tilde{L}^1_t(\dot{B}_{p,r}^{\frac{3}{p}+1})}\\
\le& C\left( \|W_0^-\|_{\dot{B}_{p,r}^{\frac{3}{p}-1}}+\frac{\nu_-}{\nu_+}(\|W_0^+\|_{\dot{B}_{p,r}^{\frac{3}{p}-1}}+\nu_-)\right).
\end{aligned}
\end{equation*}
This yields, after using (\ref{3.2}) again, for all $t\in(0,\bar{T}),$
\begin{equation*}
\begin{aligned}
&\ \ \ \|W^-_\lambda\|_{\tilde{L}^\infty_t(\dot{B}_{p,r}^{\frac{3}{p}-1})}+c\nu_+\|W^-_\lambda\|_{\tilde{L}^1_t(\dot{B}_{p,r}^{\frac{3}{p}+1})}\\
\le& C\left( \|W_0^-\|_{\dot{B}_{p,r}^{\frac{3}{p}-1}}+\frac{\nu_-}{\nu_+}(\|W_0^+\|_{\dot{B}_{p,r}^{\frac{3}{p}-1}}+\nu_-)\right)
\exp\left\{C\nu_+^{-\frac{1+\epsilon}{1-\epsilon}}\int_0^t \|W^+(\tau)\|_{\dot{B}_{p,\infty}^{\frac{3}{p}-\epsilon}}^\frac{2}{1-\epsilon}d\tau
\right\}\\
\le& C\left( \|W_0^-\|_{\dot{B}_{p,r}^{\frac{3}{p}-1}}+\frac{\nu_-}{\nu_+}(\|W_0^+\|_{\dot{B}_{p,r}^{\frac{3}{p}-1}}+\nu_-)\right)
\exp\left\{C\nu_+^{-\frac{1+\epsilon}{1-\epsilon}}\|W^+\|_{\tilde{L}^\frac{2}{1-\epsilon}_t(\dot{B}_{p,r}^{\frac{3}{p}-\epsilon})}^\frac{2}{1-\epsilon}\right\}\\
\le& C\left( \|W_0^-\|_{\dot{B}_{p,r}^{\frac{3}{p}-1}}+\frac{\nu_-}{\nu_+}(\|W_0^+\|_{\dot{B}_{p,r}^{\frac{3}{p}-1}}+\nu_-)\right)
\exp\left\{C\nu_+^{-\frac{2}{1-\epsilon}}(\nu_-+\|W^+_0\|_{\dot{B}_{p,r}^{\frac{3}{p}-1}})^\frac{2}{1-\epsilon}\right\},
\end{aligned}
\end{equation*}
which implies that if we take $\eta$ small enough in (\ref{1.14}), there holds for all $t\le \bar{T}$,
$$\|W^-_\lambda\|_{\tilde{L}^\infty_t(\dot{B}_{p,r}^{\frac{3}{p}-1})}+\nu_+\|W^-_\lambda\|_{\tilde{L}^1_t(\dot{B}_{p,r}^{\frac{3}{p}+1})}\le C\eta \nu_+<\frac{\epsilon_0}{2}\nu_+.$$
Then by a standard continuous method, we get $\bar{T}=T^\star=\infty.$
\vskip .1in
The remainder is $r=1, \epsilon=0$, by a similar arguments, using  (\ref{SKP2}) for this case and let $(\epsilon,r)=(0,1)$,
$f=\|W^+\|_{\dot{B}_{p,1}^\frac{3}{p}}^2$ in (\ref{3.3}),  the desired result can be otained. Hence, we complete the proof of Theorem \ref{t1}.

\vskip .3in
\section{Proof of Theorem \ref{t2}}
\label{s4}
One can easily get the local well-posedness of (\ref{1.1}), that is, there exists a $T^\star>0$ such that
$$(u,B)\in C([0,T^\star); \chi^{-1}(\R^3))\cap L^1([0,T^\star); \chi^1(\R^3)).$$
So we suffices to  show $T^\star=\infty$.
\vskip .1in
Now, we begin the proof.   (\ref{1.21}) is indeed equal to
\begin{equation}\label{4.1}
\left(\|W_0^-\|_{\chi^{-1}}+\frac{C\nu_-}{\nu_+}(\nu_-+\|W_0^+\|_{\chi^{-1}})\right)\exp\left\{\frac{C}{\nu_+^2}(\nu_-+\|W_0^+\|_{\chi^{-1}})^2\right\}\le(2-\epsilon_0)\nu_+
\end{equation}
for some $\epsilon_0>0.$ And next we suffices to prove the desired result under (\ref{4.1}). Let $C_1,C_2\in (0,2)$ satisfying $2(2-\epsilon_0)^2<C_2(2-C_1)^2$
and
$$a=C_2 \nu_+,\ \ a_1=C_1 \nu_+,\ b\in (\frac{2(2-\epsilon_0)}{2-C_1}\nu_+,\sqrt{2C_2}\nu_+).$$
Then consider the first equation in (\ref{1.12}), by the procedure as \cite{LL}, and using interpolation inequality,  we have
\begin{equation*}
\begin{aligned}
\frac{d}{dt}\|W^+\|_{\chi^{-1}}+&\nu_+\|W^+\|_{\chi^1}\le \|W^+\cdot\nabla W^-\|_{\chi^{-1}}+\nu_-\|W^-\|_{\chi^1}\\
\le& \|W^+\|_{\chi^0}\|W^-\|_{\chi^0}+\nu_-\|W^-\|_{\chi^1}\\
\le& \|W^-\|_{\chi^0}^2\|W^+\|_{\chi^{-1}}^\frac{1}{2}\|W^+\|_{\chi^1}^\frac{1}{2}+\nu_-\|W^-\|_{\chi^1}\\
\le & \frac{1}{2a}\|W^-\|_{\chi^0}^2\|W^+\|_{\chi^{-1}}+\frac{a}{2}\|W^+\|_{\chi^1}+\nu_- \|W^-\|_{\chi^1},
\end{aligned}
\end{equation*}
which derives by integrating in time,
\begin{equation}\label{4.12}
\begin{aligned}
\|W^+\|_{L^\infty_t(\chi^{-1})}+&(\nu_+-\frac{a}{2})\|W^+\|_{L^1_t(\chi^1)}\\
\le& \frac{1}{2a}\|W^-\|_{L^2_t(\chi^0)}^2\|W^+\|_{L^\infty_t(\chi^{-1})}
+\nu_-\|W^-\|_{L^1_t(\chi^1)}+\|W_0^+\|_{\chi^{-1}}.
\end{aligned}
\end{equation}
Define
\begin{equation}\label{4.2}
\bar{T}:=\sup\left\{t\in (0,T^\star):\ \|W^-\|_{L^\infty_t(\chi^{-1})}+\nu_+\|W^-\|_{L^1_t(\chi^1)}\le b \right\}.
\end{equation}
Then we will prove $T^\star=\bar{T}=\infty$ under (\ref{4.1}). Using (\ref{4.2}), combining with (\ref{4.12}), we have
\begin{equation}\label{4.3}
(1-\frac{b^2}{2a\nu_+})\|W^+\|_{L^\infty_t(\chi^{-1})}+(\nu_+-\frac{a}{2})\|W^+\|_{L^1_t(\chi^1)}\le \|W_0^+\|_{\chi^{-1}}+\frac{b\nu_-}{\nu_+}.
\end{equation}
Following the similar way as (\ref{4.12}), one gets
$$
\frac{d}{dt}\|W^-\|_{\chi^{-1}}+\nu_+\|W^-\|_{\chi^1}
\le \frac{1}{2a_1}\|W^+\|_{\chi^0}^2\|W^-\|_{\chi^{-1}}+\frac{a_1}{2}\|W^-\|_{\chi^1}+\nu_- \|W^+\|_{\chi^1}
$$
and thanks to (\ref{4.3}),
\begin{equation*}
\begin{aligned}
\|W^-(t)\|_{\chi^{-1}}+&(\nu_+-\frac{a_1}{2})\|W^-\|_{L^1_t(\chi^1)}\\
\le& \frac{1}{2a_1}\int_0^t \|W^+\|_{\chi^0}^2\|W^-\|_{\chi^{-1}} d\tau
+\frac{\nu_-}{\nu_+-\frac{a}{2}}(\frac{b\nu_-}{\nu_+}+\|W_0^+\|_{\chi^{-1}})+\|W^{-}_0\|_{\chi^{-1}},
\end{aligned}
\end{equation*}
 with the application of Gronwall's lemma, by interpolation's inequality and (\ref{4.3}) leads
\begin{equation*}
\begin{aligned}
&\ \ \ \ \ \ \|W^-\|_{L^\infty_t(\chi^{-1})}+(\nu_+-\frac{a_1}{2})\|W^-\|_{L^1_t(\chi^1)}\\
\le& (\|W_0^-\|_{\chi^{-1}}+\frac{\nu_-}{\nu_+-\frac{a}{2}}(\frac{b\nu_-}{\nu_+}+\|W_0^+\|_{\chi^{-1}})\exp\left\{\frac{1}{2a_1}\int_0^t \|W^+\|_{\chi^0}^2 d\tau \right\}\\
\le& (\|W_0^-\|_{\chi^{-1}}+\frac{\nu_-}{\nu_+-\frac{a}{2}}(\frac{b\nu_-}{\nu_+}+\|W_0^+\|_{\chi^{-1}})\exp\left\{\frac{1}{2a_1}\|W^+\|_{L^\infty_t(\chi^{-1})}
\|W^+\|_{L^1_t(\chi^1)}\right\}\\
\le& (\|W_0^-\|_{\chi^{-1}}+\frac{\nu_-}{\nu_+-\frac{a}{2}}(\frac{b\nu_-}{\nu_+}+\|W_0^+\|_{\chi^{-1}})\exp\left\{\frac{2a\nu_+}{a_1(2a\nu_+-b^2)(2\nu_+-a_1)}
(\frac{b\nu_-}{\nu_+}+\|W^+_0\|_{\chi^{-1}})^2
 \right\}.
\end{aligned}
\end{equation*}
which indicates that there exists  constant $C$ such that
\begin{equation*}
\begin{aligned}
\|W^-\|_{L^\infty_t(\chi^{-1})}+&(1-\frac{C_1}{2})\nu_+\|W^-\|_{L^1_t(\chi^1)}\\
\le& \left(\|W_0^-\|_{\chi^{-1}}+\frac{C\nu_-}{\nu_+}(\nu_-+\|W_0^+\|_{\chi^{-1}})\right)\exp\left\{\frac{C}{\nu_+^2}(\nu_-+\|W_0^+\|_{\chi^{-1}})^2\right\}\\
\le& (2-\epsilon_0)\nu_+.
\end{aligned}
\end{equation*}
This implies that
$$\|W^-\|_{L^\infty_t(\chi^{-1})}+\nu_+\|W^-\|_{L^1_t(\chi^1)}<\frac{2(2-\epsilon_0)}{2-C_1}<b.$$
Therefore, by standard continuous method, we get $T^\star=\bar{T}=\infty.$ This concludes the proof of Theorem \ref{t2}.




\vskip .4in

\end{document}